\newcommand{\K}[0]{{\mathcal C}}
\DeclareMathOperator*{\diag}{diag}
\newcommand{\st}{\;:\;}
\newcommand{\R}{\mathbf{R}}
\theoremstyle{plain}% Theorem-like structures
\newtheorem{theorem}{Theorem}[section]
\newtheorem{proposition}[theorem]{Proposition}
\theoremstyle{definition}
\newtheorem{example}[theorem]{Example}
\theoremstyle{remark}
\newtheorem{remark}{Remark}
\newcolumntype{d}[1]{D{.}{\cdot}{#1} }
\newcommand{\inTR}[1]{}
\newcommand{\rank}[0]{\textrm{rank}}
\newcommand{\trace}[0]{\textrm{tr}}
\newcommand{\EDIT}[1]{#1}
\newcommand{\tr}{\textrm{tr}}
\renewcommand{\st}{\mbox{s.t.} \ }
\newcommand{\RK}{R^{(k)}}
\newcommand{\setN}{N}
\begin{document}

%\jvol{00} \jnum{00} \jyear{2015} \jmonth{April}

%\articletype{GUIDE}

\title{A Low-Rank Coordinate-Descent Algorithm   for Semidefinite Programming Relaxations  of Optimal Power Flow
  }

\author{
Jakub Mare\v{c}ek
%\textsuperscript{a}$^{\ast}$
%\thanks{$^\ast$Corresponding author. Email: jakub.marecek@ie.ibm.com}
and Martin Tak\'a\v{c}
%\textsuperscript{b}}
%\affil{\textsuperscript{a}IBM Research -- Ireland, IBM Campus Damastown, D15, Ireland\\
%\textsuperscript{b}Lehigh University, Bethlehem, PA, USA}
%\received{v4.0 released April 2015}
}

\maketitle

\begin{abstract}
The alternating-current optimal power flow \EDIT{(ACOPF)} is one of the best known non-convex non-linear optimisation problems. 
\EDIT{We present a} novel re-formulation of ACOPF, \EDIT{which is based on lifting the rectangular power-voltage rank-constrained formulation,} 
and makes it possible to derive \EDIT{alternative} SDP relaxations. 
For those, we develop a first-order method based on the \EDIT{parallel} coordinate descent \EDIT{with a novel closed-form step based on roots of cubic polynomials}.
%\EDIT{quite unlike second-order interior-point methods used to solve convexifications recently.}   

%A novel rank-constrained re-formulation of alternating-current optimal power flow problem
%makes it possible to derive novel semidefinite programming (SDP) relaxations. For those, we 
%develop a solver, which is often as widely used methods with no convergence guarantees, 
%within the same accuracy. 

%for the standard datasets, 
%In line with previous observations, the objective value of the optimum of the SDP relaxation 
%is often considerably better than the one obtained with Matpower, e.g., by 38.19\% on case9mod. 
%\todo[inline]{I think we have to explude the MOD dataset....}

%and by 7.97\% on case30Q.
%\rr{we spoke that the case30Q should be exluded, not sure if some similar issue are with case9mod as well??}
% I'll check that, again, but afaik case9mod is "for real".
\end{abstract}

\section{Introduction}
 
%\todo{I hope to: add results on at least one of the Polish instances, add tables with properties of the iterates (feasibility measures for the SDP) }

Alternating-current optimal power flow problem (ACOPF) is one of the best known 
non-linear optimisation problems \cite{wood1996power}.
Due to its non-convexity, deciding feasibility is NP-Hard 
even for a tree network with fixed voltages \cite{Lehmann2015}.
Still, there has been much recent progress \EDIT{\cite{6756976,6815671}}:  
Bai et al. \cite{Bai2008} introduced a semidefinite programming (SDP) relaxation, which turned out to be \EDIT{particularly} opportune.
Lavaei and Low \cite{lavaei2012zero}
have shown \EDIT{that} the SDP relaxation produces exact solutions,
under certain spectral conditions.
More generally,
it can be strengthened so as to obtain a hierarchy of SDP relaxations
whose optima are asymptotically converging to the true optimum of ACOPF
\cite{Ghaddar2015,RTE2013,josz2015moment}.
Unfortunately, the run-times of \EDIT{even} the best-\EDIT{performing} solvers for the SDP relaxations
\EDIT{\cite{sturm1999,permenter2015solving} remain} % several orders of magnitude 
much higher than that of commonly used methods without global convergence guarantees such as Matpower \cite{Matpower}.

Following a brief overview of our notation, we introduce a novel rank-constrained  reformulation 
of the ACOPF problem \EDIT{in Section \ref{sec:reform}}, where all constraints, except for the rank constraint, are coordinate-wise.
\EDIT{Based on this reformulation, we derive novel SDP relaxations.}
%Notice that some assumptions are necessary, considering the problem is NP-hard, 
%  and we are hence guaranteed to find only a stationary point, %the coordinate-wise local optimum 
%  in general.
%However, if we assume that the SDP relaxation has a rank one solution, we provide a theorem, which can be used to check if we have found a global optimum.}
%\rr{Subsequently, we present a method, which solves the relaxation with progressively higher rank,
%testing the stationary points for global optimality.}
Next, we present a parallel coordinate-descent algorithm \EDIT{in Section \ref{sec:algo}}, which solves a sequence of
convex relaxations with coordinate-wise constraints, using a novel closed-form step considering roots of cubic polynomials.
\EDIT{
We can show:
\begin{itemize}
\item the algorithm converges to the exact optimum of the non-convex problem, when the optimum of the non-convex problem coincides with the optimum of the novel SDP relaxations and certain additional assumptions
are satisfied, as detailed in Section \ref{sec:analysis}
\item the algorithm suggests a strengthening of the novel SDP relaxations is needed, whenever it detects the optimum of the non-convex problem has not been found, using certain novel sufficient conditions
of optimality
%We also provide sufficient and necessary conditions for a stationary point to be an optimum.
\item our pre-liminary implementation reaches a precision comparable to the default settings of Matpower \cite{Matpower}, the commonly used interior-point method without global convergence guarantees,
on certain well-known instances including the IEEE 118 bus test system, within comparable times, as detailed in Section \ref{sec:numerical}
\end{itemize}
}
\EDIT{although much more work remains to be done, especially with focus on large-scale instances. Also, as with most solvers, the proposed assumes feasibility and does not certify infeasibility, when encountered. }
%still often providing much better solutions, in terms of the objective value.
The proofs of convergence rely on the work of Burer and Monteiro \cite{BurerMonteiro2005},
Grippo et al. \cite{grippo2009necessary},
and our earlier work \cite{RT2015,marecek2015distributed}.

%\redd{our current rank 1 primal}
%
%\redd{katya's BCD for dual with A as low rank}
%
%\redd{katya's BCD for dual with A as big rank}
%
%\redd{iteration complexity to stationary point for primal rank-1}
%
%\redd{proof that if $r>\mathcal{O}(\sqrt m)$ then every local minimum is global
%for rank r minimization problem}
  
\section{The Problem}

%\subsection{Formulation}

Informally, within the optimal power flow problem, one aims to decide where to generate power,
such that the demand for power is met and costs of generation are minimised.
In the alternating-current model, one considers the complex voltage, complex current, 
and complex power, although one may introduce decision variables representing only a 
subset thereof.
The constraints are non-convex in the alternating-current model, 
and a particular care hence needs to be taken when modelling those, 
and designing the solvers to match.

Formally, we start a network of buses $\setN$, connected by branches $L \subseteq \setN  \times \setN$, modeled as $\Pi$-equivalent circuits,
with the input comprising also of:
\begin{itemize}
\item $G \subseteq \setN$, which are the generators, \EDIT{with the associated coefficients 
$c^0_k, c^1_k, c^2_k$ of the quadratic cost function at $k \in G$},
\item $P^d_k + jQ^d_k$, which are the active and reactive loads at each bus $k \in \setN$, % (demand) 
\item $P_k^{\min}$, $P_k^{\max}$, $Q_k^{\min}$ and $Q_k^{\max}$, which are the limits on active and reactive generation capacity at bus $k \in G$, where $P_k^{\min}=P_k^{\max}=Q_k^{\min}=Q_k^{\max} =0$ for all $k \in \setN \setminus G$,
\item $y \in \mathbb{C}^{|N|\times|N|}$, which is the network admittance matrix capturing the value of the shunt element $\bar{b}_{lm}$ and series admittance $g_{lm}+jb_{lm}$ at branch $(l,m) \in L$,
\item $V_k^{\min}$ and $V_k^{\max}$, which are the limits on the absolute value of the voltage at a given bus $k$,
\item $S_{lm}^{\max}$, which is the limit on the absolute value of the apparent power of a branch $(l,m)\in L$.
\end{itemize}
In the rectangular power-voltage formulation, the variables are:
\begin{itemize}
\item $P^g_k + j Q^g_k$, which is the power generated at bus $k \in G$,
\item $P_{lm} + j Q_{lm}$, which is the power flow along branch $(l,m) \in L$,
\item $\Re{V_k}+ j \Im{V_k}$, which is the voltage at each bus $k \in N$.
\end{itemize}
The power-flow equations at generator buses $k \in G$ are:
\begin{align}
P^g_k &= P_k^d+\Re{V_k} \sum_{i=1}^n ( { \Re{y_{ik}} \Re{V_i} - \Im{y_{ik}} \Im{V_i} })      + \Im{V_k} \sum_{i=1}^n ({ \Im{y_{ik}} \Re{V_i} + \Re{y_{ik}} \Im{V_i} }), \label{eqn:Pk} \\
Q^g_k &= Q_k^d+\Re{V_k} \sum_{i=1}^n ({ - \Im{y_{ik}} \Re{V_i} - \Re{y_{ik}} \Im{V_i} }  )     + \Im{V_k} \sum_{i=1}^n ({ \Re{y_{ik}} \Re{V_i} - \Im{y_{ik}} \Im{V_i} }), \label{eqn:Qk}
\end{align}
\EDIT{while at all other buses $k \in N \setminus G$ we have:}
\begin{align}
0 &= P_k^d+\Re{V_k} \sum_{i=1}^n ( { \Re{y_{ik}} \Re{V_i} - \Im{y_{ik}} \Im{V_i} })      + \Im{V_k} \sum_{i=1}^n ({ \Im{y_{ik}} \Re{V_i} + \Re{y_{ik}} \Im{V_i} }), \\
0 &= Q_k^d+\Re{V_k} \sum_{i=1}^n ({ - \Im{y_{ik}} \Re{V_i} - \Re{y_{ik}} \Im{V_i} }  )     + \Im{V_k} \sum_{i=1}^n ({ \Re{y_{ik}} \Re{V_i} - \Im{y_{ik}} \Im{V_i} }).
\end{align}
\EDIT{Additionally, the power flow at branch $(l,m) \in L$ is expressed as}
\begin{align}
P_{lm} &= b_{lm} ( \Re{V_l} \Im{V_m} - \Re{V_m} \Im{V_l}) \label{eqn:Plm}       + g_{lm}( \Re{V_l}^2 +\Im{V_m}^2- \Im{V_l}, \Im{V_m}- \Re{V_l} \Re{V_m}),   \\
Q_{lm}  &= b_{lm} ( \Re{V_l} \Im{V_m} - \Im{V_l} \Im{V_m}-\Re{V_l}^2 -\Im{V_l}^2) \notag \\
    & \ + g_{lm}( \Re{V_l}\Im{V_m}- \Re{V_m} \Im{V_l}- \Re{V_m} \Im{V_l})   -\frac{\bar b_{lm}}{2}(\Re{V_l}^2 +\Im{V_l}^2). \label{eqn:Qlm}
\end{align}
\EDIT{Considering the above,} the alternating-current optimal power flow (ACOPF) is:
%\todo[inline]{this sentence sounds weird to me:)}
%Let $x$ be a vector of variables defined as $x: =[\Re{V}_k \quad \Im{V}_k ]^T$, and let the cost of power generation be  $\sum_{k\in G} f_k(P_k^g)$ where $f_k(P_k^g)= c^2_k (P_k^g)^2 + c^1_k P_k^g + c^0_k$, with $c^2_k, c^1_k, c^0_k$ non-negative. The classical OPF problem is presented below:
%{\small
\begin{align*}
\min &\sum_{k\in G}\left( c^2_k (P_k^g + P_k^d )^2 + c^1_k (P_k^g + P_k^d) + c^0_k \right)\hspace{-0.2cm}& \tag*{[ACOPF]} \label{ACOPF} \\
\st 
& P^{\min}_k \le P^g_k + P^d_k  \le P^{\max}_k &  \forall k  \in G\\
%& P_k = P^{d}_k &  \forall k  \in N \setminus G\\
& Q^{\min}_k \le Q^g_k + Q^d_k \le Q^{\max}_k & \forall k  \in G \\
%& Q_k = Q^{d}_k &  \forall k \in N \setminus G\\
& (V_k^{\min})^2 \le\Re{V_k}^2+\Im{V_k}^2 \le ( V_k^{\max})^2 \hspace{-0.3cm}& \forall k  \in N \\
& P_{lm}^2+Q_{lm}^2 \le (S_{lm}^{\max})^2 & \forall (l,m) \in L\\
& \eqref{eqn:Pk}-\eqref{eqn:Qlm},&
\end{align*}
\EDIT{where $c^2_k$ is the coefficient of the leading term of the quadratßic cost function at generator $k$.}
% with the apparent power $S_{lm}=P_{lm}+jQ_{lm}$ on the line $(l,m) \in E$ represented in terms of voltages.

% \begin{align}
% & P^{\min}_k \le P^g_k \le P^{\max}_k & \\
% & Q^{\min}_k \le Q^g_k \le Q^{\max}_k & \\
% & (V_k^{\min})^2 \le\Re{V_k}^2+ \Im{V_k}^2 \le ( V_k^{\max})^2 & \forall k  \in N\\
% & P_{lm}^2+Q_{lm}^2 \le S_{l,m}^{\max} & \forall (l,m) \in E
% \end{align}
In line with recent work \cite{Bai2008,lavaei2012zero,molzahn2011,Ghaddar2015}, let $e_k$ be the $k^{th}$ standard basis vector in $\mathbb{R}^{2|N|}$ and define:
\begin{align*}
y_k&=e_ke_k^T y, \\
y_{lm}&=(j\frac{\bar b_{lm}}{2}+g_{lm}+jb_{lm})e_le_l^T - (g_{lm}+jb_{lm})e_le_m^T, \\
Y_k&= \frac{1}{2} \left[ \begin{matrix} \Re(y_k  + y_k^T)  &\Im(y_k^T -y_k)  \\
   \Im(y_k  - y_k^T)& \Re(y_k  + y_k^T) \end{matrix}  \right], \\
\bar{Y}_k&= -\frac{1}{2} \left[ \begin{matrix} \Im(y_k  + y_k^T)& \Re(y_k -y_k^T) \\
   \Re(y_k^T  - y_k) & \Im(y_k  + y_k^T) \end{matrix}  \right], \\
   M_k&=  \left[ \begin{matrix}e_ke_k^T & 0  \\
   0 & e_ke_k^T \end{matrix}  \right], \\
   Y_{lm}&= \frac{1}{2} \left[ \begin{matrix} \Re(y_{lm}  + y_{lm}^T)& \Im(y_{lm}^T -y_{lm}) \\
   \Im(y_{lm} - y_{lm}^T) & \Re(y_{lm}  + y_{lm}^T) \end{matrix}  \right], \\
\bar Y_{lm}  &= \frac{{ - 1}}{2}\left[ {\begin{array}{*{20}c}
   {\Im ( y_{lm}  + y_{lm}^T ) } & {\Re ( y_{lm}  - y_{lm}^T ) }  \\
   {\Re ( y_{lm}^T  - y_{lm} ) } & {\Im ( y_{lm}  + y_{lm}^T ) }  \\
\end{array}} \right].
  \end{align*}
Using these matrices, we can rewrite \ref{ACOPF} as a real-valued polynomial optimization problem of degree four
in variable $x \in \R^{2|N|}$ comprising of $\Re{V}_k \in \R^{|N|}$ and $\Im{V}_k \in \R^{|N|}$, stacked:
%{\small
\begin{align}
\min &\sum_{k\in G} \left(c^2_k(\text{tr}(Y_k xx^T) + P_k^d)^2+c^1_k(\text{tr}(Y_k xx^T) + P_k^d)+c^0_k\right) \tag*{[PP4]} \label{PP4}                               & \\
\st 
&P_k^{\min}\leq \text{tr}(Y_kxx^T) + P_k^d \leq P_k^{\max}                  & \forall k \in G  \label{eqn:pp1} \\
&P_k^{d} = \text{tr}(Y_kxx^T)                                                    & \forall k \in N \setminus G \label{eqn:pp2}  \\
& Q_k^{\min}\leq \text{tr}(\bar{Y}_kxx^T) + Q_k^d \leq Q_k^{\max}                  & \forall k \in G  \\
&Q_k^{d} = \text{tr}(Y_kxx^T)                                                    & \forall k \in N \setminus G  \\
&(V_k^{\min})^2 \leq \text{tr}(M_kxx^T) \leq  (V_k^{\max})^2                      & \forall k \in N \label{eqn:pp3} \\
&(\text{tr}(Y_{lm}xx^T))^2+(\text{tr}(\bar{Y}_{lm}xx^T))^2 \leq (S_{lm}^{\max})^2 & \forall (l,m) \in L, \label{eqn:pp4}
\end{align}
%}
Henceforth, we use $n$ to denote $2|N|$, i.e., the dimension of the real-valued problem \ref{PP4}.
%quadratic objective function

%often is the cost of power generation where 
%$$f_k(x):=\left(c^2_k(P_k^d + \text{tr}(Y_kxx^T))^2+c^1_k(P_k^d + \text{tr}(Y_kxx^T))+c^0_k\right).$$ Constraints \eqref{eqn:pp1} and \eqref{eqn:pp2} impose a limitation on the active and reactive power. Constraints \eqref{eqn:pp3} restrict the voltage on a given bus. Constraints \eqref{eqn:pp4} limit the apparent power flow at each end of a given line. 
Further, the problem can be lifted to obtain a rank-constrained problem in $W=xx^T \in \R^{n \times n}$:
\begin{align}
\min &\sum_{k\in G} f_k(W)  \tag*{[R1]} \label{R1}                   & \\
\st 
&P_k^{\min} \leq \text{tr}(Y_kW) + P_k^d \leq P_k^{\max}             & \forall k \in G \label{R1-Constraint1} \\
& P_k^{d} = \text{tr}(Y_kW)                                         & \forall k \in N \setminus G \\
& Q_k^{\min}\leq \text{tr}(\bar{Y}_kW) +Q_k^d  \leq Q_k^{\max}       & \forall k \in G \\
& Q_k^d = \text{tr}(\bar{Y}_kW)                                      & \forall k \in N \setminus G  \\
&(V_k^{\min})^2 \leq \text{tr}(M_kW) \leq  (V_k^{\max})^2            & \forall k \in N \\
&(\text{tr}(Y_{lm}W))^2+(\text{tr}(\bar{Y}_{lm}W))^2 \leq (S_{lm}^{\max})^2 & \forall (l,m) \in L \label{R1-Constraint4} \\
& W \succeq 0, \quad \rank(W) = 1,                                   &
\end{align}
for a suitable definition of $f_k$. This problem \ref{R1} is still NP-Hard, but where one can drop the rank constraint to obtain a strong and efficiently solvable SDP relaxation:
\begin{align}
\min &\sum_{k\in G} f_k(W) \; \st \; (\ref{R1-Constraint1}-\ref{R1-Constraint4}), \quad W \succeq 0, \tag*{[LL]} \label{LL}
\end{align}
as suggested by \cite{Bai2008}. Lavaei and Low \cite{lavaei2012zero} studied the conditions, under which the relaxation \ref{LL} (Optimization 3 of \cite{lavaei2012zero})
is equivalent to \ref{R1}. 
We note that for traditional solvers \cite{molzahn2011,lavaei2012zero,Ghaddar2015}, the dual of the relaxation (Optimization 4 of \cite{lavaei2012zero})
is much easier to solve than \ref{LL}\EDIT{, as documented in Table \ref{tabComparisonWithGP}}.
Ghaddar et al.\ \cite{Ghaddar2015} have shown the relaxation \ref{LL} to be equivalent to a first-level $[OP_4-H_1]$  
of a certain hierarchy of relaxations,
and how to obtain the solution to \ref{R1} under much milder conditions than those of \cite{lavaei2012zero}.

\section{The Reformulation}
\label{sec:reform}

%We return to the non-convex optimisation over the vectors \eqref{eqn:1}--\eqref{eqn:4},
The first contribution of this paper is a lifted generalisation of \ref{R1}:
 
\begin{align}
\min & %\ \ F(W):=
\sum_{k\in G} f_k(W) \tag*{[R$r$BC]}
\label{RrBC} \\
\st & t_k = \text{tr}(Y_kW)                               & \forall k \in N \label{eq:bc0}\\ 
P_k^{\min} - P_k^d \leq \;& t_k \leq P_k^{\max} - P_k^d     & \forall k \in G \label{eq:bc1}\ \\
%& P_k^{d} = \text{tr}(Y_kW)   & \forall k \in N \setminus G  \\
& P_k^{d} = t_k                                           & \forall k \in N \setminus G  \\
& g_k = \text{tr}(\bar{Y}_kW)                             & \forall k \in N   \\ 
Q_k^{\min} - Q_k^d \leq \; & g_k \leq Q_k^{\max} - Q_k^d     & \forall k \in G \label{eqn:bc2} \\
%& Q_k^{d} = \text{tr}(\bar{Y}_kW)                         & \forall k \in N \setminus G  \\
& Q_k^{d} = g_k                                           & \forall k \in N \setminus G  \\
& h_k = \text{tr}(M_kW)                                   & \forall k \in N \\ 
(V_k^{\min})^2 \leq \; & h_k \leq  (V_k^{\max})^2            & \forall k \in N \label{eqn:bc3} \\
& u_{lm} =  \text{tr}(Y_{lm}W)                            & \forall (l,m) \in L \\ 
& v_{lm} = \text{tr}(\bar{Y}_{lm}W)                       & \forall (l,m) \in L \label{eqn:barLimLM}\\
& z_{lm} = (u_{lm})^2+(v_{lm})^2                          & \forall (l,m) \in L \\   
& z_{lm}  \leq (S_{lm}^{\max})^2                          & \forall (l,m) \in L \label{eqn:bc4} \\
& W \succeq 0, \rank(W) \le r. \label{eqn:bc5}
\end{align}
 
There, we still have:

\begin{proposition}[Equivalence]
\label{prop:1}
[R1BC] is equivalent to \ref{PP4}.
\end{proposition}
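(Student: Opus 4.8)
The plan is to exhibit an explicit, objective-preserving correspondence between the feasible points of [R1BC] (that is, \ref{RrBC} with $r=1$) and those of \ref{PP4}, and then to conclude that the two infima coincide. The one structural ingredient is the standard linear-algebraic fact that a symmetric matrix $W \in \R^{n \times n}$ satisfies $W \succeq 0$ together with $\rank(W) \le 1$ if and only if $W = xx^T$ for some $x \in \R^{n}$. This identifies the feasible set cut out by the rank-one constraint \eqref{eqn:bc5} with the image of the map $x \mapsto xx^T$. That map is two-to-one, since $xx^T = (-x)(-x)^T$, but the sign ambiguity is immaterial: every quantity occurring in either problem depends on $x$ only through the product $xx^T$.

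Next I would eliminate the auxiliary variables of [R1BC]. The variables $t_k, g_k, h_k, u_{lm}, v_{lm}, z_{lm}$ are each pinned down by a defining equality constraint --- namely \eqref{eq:bc0}, the stated definitions of $g_k$ and $h_k$, the definition of $u_{lm}$, the constraint \eqref{eqn:barLimLM}, and the definition $z_{lm} = (u_{lm})^2 + (v_{lm})^2$ --- so once $W$ is fixed they are uniquely determined and may be substituted out. Performing this substitution with $W = xx^T$ turns each surviving constraint of [R1BC] into the corresponding constraint of \ref{PP4}: the box constraints \eqref{eq:bc1}, \eqref{eqn:bc2}, and \eqref{eqn:bc3} reproduce \eqref{eqn:pp1}, its reactive-power analogue, and \eqref{eqn:pp3} after moving the loads $P_k^d, Q_k^d$ across the inequality; the equalities for $k \in N \setminus G$ yield \eqref{eqn:pp2} and its reactive counterpart; and $z_{lm} \le (S_{lm}^{\max})^2$ in \eqref{eqn:bc4}, expanded through the definition of $z_{lm}$, collapses to the apparent-power constraint \eqref{eqn:pp4}. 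Taking $f_k(W) := c^2_k(\text{tr}(Y_k W)+P_k^d)^2 + c^1_k(\text{tr}(Y_k W)+P_k^d) + c^0_k$, the objective $\sum_{k \in G} f_k(xx^T)$ is term-for-term equal to the objective of \ref{PP4}.

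From this matching it follows that $x$ is feasible for \ref{PP4} if and only if $W = xx^T$, augmented with the auxiliary variables set to their defining values, is feasible for [R1BC], with equal objective value in both directions; and conversely every feasible $W$ of [R1BC] arises as $xx^T$ for some such $x$. Hence the feasible regions are in value-preserving correspondence and the infima agree, which is exactly the asserted equivalence. I do not anticipate a genuine obstacle: the result is essentially a verification that the lifting and the introduction of the defining auxiliary variables are lossless, so the proof is a direct constraint-by-constraint and term-by-term check. The only point requiring even minor care is the two-to-one nature of $x \mapsto xx^T$, which I would dispatch by the sign-invariance remark above.
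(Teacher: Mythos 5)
Your proof is correct. The paper in fact states Proposition \ref{prop:1} without any proof at all, treating it as evident; your constraint-by-constraint verification --- identifying $\{W \succeq 0,\ \rank(W)\le 1\}$ with $\{xx^T : x\in\R^n\}$, substituting out the auxiliary variables $t,g,h,u,v,z$ via their defining equalities, and matching the objective through the stated choice of $f_k$ --- is exactly the argument the authors leave implicit, including the correct observation that the two-to-one sign ambiguity of $x\mapsto xx^T$ is harmless because everything depends on $x$ only through $xx^T$.
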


%\begin{proof}
%\end{proof}

Even in the special case of $r = 1$, however, we have lifted the 
problem to a higher dimension by adding variables $t, g, h, u,v,z$, which are box-constrained functions of $W$.

Subsequently, we make four observations to motivate our approach to solving the \ref{RrBC}:
\begin{itemize} 
\item[$O_1$:] \EDIT{constraints \eqref{eq:bc1}, \eqref{eqn:bc2}, \eqref{eqn:bc3}, and \eqref{eqn:bc4} 
 are box constraints, while the remainder of (\ref{eq:bc0}--\ref{eqn:bc4}) are linear equalities}
\item[$O_2$:] using elementary linear algebra:
\begin{align*}\{ W \in \mathcal{S}^n \; \textrm{s.t.} \; W \succeq 0, \quad \rank(W) \le r \} \\ 
= \{ RR^T \; \textrm{s.t.} \; R \in \R^{n \times r} \},
\end{align*}
where $\mathcal{S}^n$ is the set of symmetric $n \times n$ matrices.
%if $  W\in \R^{n \times n}$ such that $W \succeq 0$ and $\rank(W) = r$, then it can be written as $W = R R^T$
%where $R \in \R^{n \times r}$
\item[$O_3$:] if $\rank(W^*) > 1$ for the optimum $W^*$ of \ref{LL},
there are no known methods for extracting the global optimum of \ref{R1} from $W$, except for \cite{Ghaddar2015}.
\item[$O_4$:] zero duality gap at any SDP relaxation in the hierarchy of \cite{Ghaddar2015} does not guarantee the solution of the SDP relaxation is exact for \ref{PP4}, c.f. \cite{Josz2015}.
\end{itemize}

Note that Lavaei and Low \cite{lavaei2012zero} restate the condition in Observation $O_3$ 
in terms of ranks using a related relaxation (Optimization 3), where the rank has to be strictly larger than 2.

\section{The Algorithm}
\label{sec:algo}

%Due to the fact that \eqref{RrBC} is a constrained optimization problem, 
%we have to have a strategy to be able to handle them.

Broadly speaking, we use the well-established Augmented Lagrangian approach 
\cite{powell1978fast,conn1991globally}, with a low-rank twist \cite{BurerMonteiro2005},
and a parallel coordinate descent with a closed-form step.
Considering Observation $O_2$, we replace variable $W \in \R^{n \times n}$ by $RR^T \in \R^{n \times n}$ for
 $R \in \R^{n \times r}$ % in solving \eqref{RrBC} 
to obtain the following augmented Lagrangian function:
%{\small
\begin{align}
\label{AugLagrangian}
&\mathcal{L}(R,
t,h,g,u,v,z,\lambda^t, \lambda^g, \lambda^h,
\lambda^{u},\lambda^{v},\lambda^z):= \\
& \sum_{k\in G} f_k(RR^T) \notag
\\&- 
\sum_k \lambda^t_k (t_k  - \tr(Y_k RR^T)) 
 + \tfrac{\mu}{2} \sum_k   (t_k  - \tr(Y_k RR^T)) ^2 \notag
\\&- 
\sum_k \lambda^g_k (g_k - \tr(\bar Y_k RR^T)) 
 + \tfrac{\mu}{2} \sum_k   (g_k - \tr(\bar Y_k RR^T)) ^2 \notag
\\&- 
\sum_k \lambda_k^h  (h_k  - \tr(M_k RR^T)) 
 + \tfrac{\mu}{2} \sum_k   (h_k  - \tr(M_k RR^T)) ^2 \notag
\\&- 
\sum_{(l,m)} \lambda^u_{(l,m)} (u_{(l,m)} - \tr(Y_{(l,m)}  RR^T)) 
  + \tfrac{\mu}{2} \sum_{(l,m)}   (u_{(l,m)}- \tr(Y_{(l,m)}  RR^T)) ^2 \notag
 \\&- 
\sum_{(l,m)} \lambda^v_{(l,m)} (v_{(l,m)} - \tr(\bar Y_{(l,m)}  RR^T)) 
  +  \tfrac{\mu}{2} \sum_{(l,m)}   (v_{(l,m)}- \tr(\bar Y_{(l,m)}  RR^T)) ^2 \notag
 \\&- 
\sum_{(l,m)} \lambda^z_{(l,m)} (z_{(l,m)} - u_{(l,m)}^2
-v^2_{(l,m)} ) 
 +  \tfrac{\mu}{2} \sum_{(l,m)}   (z_{(l,m)} - u_{(l,m)}^2
-v^2_{(l,m)}) ^2 + \nu \mathcal{R}. \notag
\end{align}
%}
Note that constants $\mu, \nu > 0$ pre-multiply regularisers, where $\mathcal{R}$ can often be 0 in practice,
although not in our analysis, where we require $\mathcal{R} = \det(R^T R)$,
which promotes low-rank solutions. 

%\todo[inline]{I think that $\mathcal{R}$ is not sufficiently explained...}

As suggested in Algorithm~\ref{alg:SCDM}, 
 we increase the rank $r = 1, 2, \ldots$ allowed in $W$ in an outer loop of the algorithm. 
In an inner loop, we use coordinate descent method to find an 
approximate minimizer of  
$\mathcal{L}(R,
t,h,g,u,v,z,\lambda^t, \lambda^g, \lambda^h,
\lambda^{u},\lambda^{v},\lambda^z)$, and denote the $k$-th iterate $\RK$.
%for fixed values of Lagrange multipliers
%$\lambda^\cdot$.
Note that variables $t,h,g,z$ have simple box constraints, which have to be considered outside of $\mathcal{L}$.

%\rr{
%In many cases, such a regularisation makes it possible to improve the covergence rates
%considerably, at the price of using a higher-order polynomial in the constraints.}

%After the approximate minimizer is found, Lagrange  multipliers $\lambda^\cdot$ and parameter $\mu$ are updated in a classical way.
 
%Note that the objective function as a function of $R$ is -- in general -- non-convex.  However, if we fix all values in $R$ except one, i.e. we would like to change only $(i,j)$-th element of $R$, it will turn out that one has to minimize 4 order polynomial.
%This polynomial has at most 3 real stationary points and can be found using a closed form formulas. The global minimum can be found by checking the objective value at each stationary point and choosing the best one. 

%Tohle muzeme prip. smazat.
%\rr{
%As we explain in Section \ref{sec:analysis},
%if the optimal solution $W^*$ of [LL], i.e. the SDP relaxation of \ref{R1} obtained by dropping the rank constraint,
%has a $\rank(W^*)=1$, we can find a vector $R^*$ such that $R^* (R^*)^T = W^*$, and hence a global solution of \eqref{PP4},
%without considering $r$ larger than 2.
%Otherwise, considering Observation $O_3$, the optimal solution $W^*$ of [LL] is not particularly useful.
%}
%Let us comment on the features of the proposed algorithm: % first, though.

In particular:

%In \cite{lavaei2012zero} authors provided a 
%a way how to construct optimal solution of \eqref{PP4} 
%if $\rank(W^*)\leq 2$ and under an assumption that some zero-duality-gap condition is satisfied (Theorem 2, ii) in \cite{lavaei2012zero}. However, this condition may not be satisfied for any problem ACOPT problem (however, in \cite{lavaei2012zero} they showed that the problem can be modified such that it would satisfy the zero-duality-gap condition).
 
 \begin{algorithm}[tb!]
 \begin{algorithmic}[1]
 \FOR{$r=1,2,\dots$}
 \STATE \label{alg:stp:initialpoint} choose $R \in \R^{m\times r}$ 
 \STATE compute corresponding values of $t,h,g,u,v,z$
 \STATE project $t,h,g,u,v,z$ onto the box constraints %to ensure feasibility
 \FOR{$k=0,1,2,\dots$} \label{inner-loop}
   \STATE 
   \textbf{in parallel}, minimize $\mathcal{L}$ in $t, g, h, u, v, z$, coordinate-wise   \label{updateT}
   \STATE 
   \textbf{in parallel}, minimize $\mathcal{L}$ in $R$, coordinate-wise   \label{updateR}
%to obtain an approximate solution of $\mathcal{L}$
%(we suggest that you update each coordinate only few times)
  \STATE update Lagrange multipliers $\lambda^t, \lambda^g, \lambda^h,
\lambda^{u},\lambda^{v},\lambda^z$ 
  \STATE update $\mu$
  \STATE terminate, if criteria are met
  \ENDFOR \label{inner-loop-end}
  \ENDFOR
 \end{algorithmic}
 \caption{A Low-Rank Coordinate Descent Algorithm} % Parallel
 \label{alg:SCDM}
\end{algorithm} 

\subsubsection*{The Outer Loop}

The outer loop (Lines 1-12) is known as the ``low-rank method'' \cite{BurerMonteiro2005}.
%It is known \cite{BurerMonteiro2005} that one has to perform at most $\mathcal{O}(\sqrt m)$ iterations of the outer loop, where $m$ is the number 
%of constraints in the problem. This follows from %Theorem *** %of Pataki \cite{pataki1998} and 
%Theorem 1.2 of Barvinok \cite{barvinok2001}.
%Later, \cite{BurerMonteiro2005} gave an elementary proof for a related Lemma 3.1. 
As suggested by Observation $O_3$, in the case of \ref{LL}, one may want to perform only two iterations $r = 1, 2$.
In the second iteration of the outer loop, one should like to test, 
whether the numerical rank of the iterate $R_k$ in the inner iteration $k$ has numerical rank $1$.
If this is the case, one can conclude the solution obtained for $r = 1$ is exact.
This test, sometimes known as ``flat extension'', has been studied both in terms of numerical implementations and applicability by Burer and Choi \cite{burer2006}.

\subsubsection*{The Inner Loop}

%For $r = 1$, 
The main computational expense of the proposed algorithm is to find an approximate minimum of 
$\mathcal{L}(R,
t,h,g,u,v,z,\lambda^t, \lambda^g, \lambda^h,
\lambda^{u},\lambda^{v},\lambda^z)$
with respect to $R,t,h,g,u,v,z$ within the inner loop (Lines 6--7).
Note that $\mathcal{L}$ as a function of $R$ is -- in general -- non-convex.  
The inner loop employs a simple iterative optimization strategy, known as 
the coordinate descent. There, 
two subsequent iterates differ only in a single block of coordinates.
In the very common special case, used here, we consider single coordinates,
in a cyclical fashion.
This algorithm has been used widely at least since 1950s.
%See \citep{Tseng:CCMCDM:Smooth, Tseng:CGDM:Nonsmooth, Tseng:CGDMLC:Nonsmooth, Tseng:CBCDM:Nonsmooth} for classical studies.
Recent theoretical guarantees of random coordinate descent algorithm are due to Nesterov \cite{Nesterov:2010RCDM} and the present authors \cite{RT2015,marecek2015distributed}.
See the survey of Wright \cite{Wright:ABCRRO} for more details.
%Coordinate descent algorithm is also closely related to 
%linear and non-linear Gauss-Seidel methods and related approaches \cite{Bertsekas-Book}.

\subsubsection*{The Closed-Form Step}

An important ingredient in the coordinate descent is a novel closed-form step.
Nevertheless, if we update only one scalar of $R$
at a time, and fix all other scalars, 
the minimisation problem turns out to be the minimisation of a fourth order polynomial.
In order to find the minimum of a polynomial $ax^4 + bx^3 + cx^2 + d x + 0$,
we need to find a real root of the polynomial $4ax^3 + 3b x^2 + 2c x  + d = 0$.
%for which there is a closed-form solution due to Cardano.
This polynomial has at most 3 real roots and can be found using closed form formulae due to Cardano \EDIT{ \cite{cardano1968ars}}. 
Whenever you fix the values across all coordinates except one, finding the best possible update for the one given coordinate 
requires either the minimisation of a quadratic convex function with respect to simple box constraints (for variables $t,g,h,z$) 
or minimisation of a polynomial function of degree 4
with no constraints (for variables $R, u, v$),
either of which can be done by checking the objective value at each out of 2 (for variables $t,g,h,z$) or 3 (for variables $R, u, v$)
stationary points and choosing the best one. 

\subsubsection*{The Parallelisation}

For instances large enough, one can easily exploit parallelism.
Notice that minimization of coordinates of $t,g,h,u,v,z$ can be carried out in parallel without any locks,
 as there are no dependences.
One can also update coordinates of $R$ in parallel, % \cite{bradley2011parallel}, 
 although some degradation of the speed-up thus obtainable is likely,
 as there can be some dependence in the updates. % Saha10finite, 
The degradation is hard to bound. Most analyses, c.f. \cite{Wright:ABCRRO}, %\citep{Tseng:CCMCDM:Smooth, Tseng:CGDM:Nonsmooth, Tseng:CGDMLC:Nonsmooth, Tseng:CBCDM:Nonsmooth, Yun2009, } 
hence focus on the uniformly random choice of (blocks of) coordinates,
although there are exceptions \cite{marecek2015distributed}.
Trivially, one could also parallelise the outer loop, for each $r$ that
should be considered.

%For some of the most recent analyses of asynchronous variants, see \cite{RT:hogwild,marecek2015distributed}.
%\rr{
%In every iteration of parallel coordinate descent, a subset of coordinate-blocks $\NS \subset \{ 1, 2, \ldots, n \}$ are chosen and for each of the chosen (blocks of) coordinates $i \in \NS$ is the update %computed and applied in parallel. The new iteration $x_{k+1}$ hence differs from the previous iteration $x_k$ in at most in $|\NS|$ coordinate-blocks.
% each computer chooses some (blocks of) coordinates, , in each iteration and computes the update for each .
%Notice that, if the step-direction were given by the choice of the blocks of coordinates,
% the step-length computation would need to take the parallelism into account.
%}

\subsubsection*{Sufficient Conditions for Termination of the Inner Loop}

%$\ $
%\rr{this should be rewritten, maybe say that "one could do that". I saw a benefit of this, when I put the solution obtained by our algorithm then Matpower finishes much faster}

For both our analysis and in our computational testing, we use a ``target infeasibility''
stopping criterion for the inner loop, considering squared error:
%{\small
\begin{align}
\EDIT{T_k(\RK)} = & \sum_k  (t_k  - \tr(Y_k \RK (\RK)^T))^2 + \label{eq:T}    
 \sum_k  (g_k -  \tr(\bar Y_k \RK (\RK)^T))^2 +  \\
 &  \sum_k  (h_k  - \tr(M_k \RK (\RK)^T))^2 + \notag    \sum_{(l,m)} (u_{(l,m)} - \tr(Y_{(l,m)}  \RK (\RK)^T))^2 + \notag \\
 & \sum_{(l,m)}   (v_{(l,m)} - \inTR(\bar Y_{(l,m)}  \RK (\RK)^T))^2 + \notag   \sum_{(l,m)}   (z_{(l,m)} - u_{(l,m)}^2 -v^2_{(l,m)} )^2. \notag 
\end{align}
%}
We choose the threshold to match the accuracy in terms of squared error obtained by Matpower using default settings on the same instance.
$\EDIT{T_k(\RK)} \le 0.00001$ is often sufficient.

%Notice that this makes use of the \emph{non-convex} Lagrangian of the
%polynomial optimisation problem, rather than the \emph{convex} augmented Lagrangian 
%of the semidefinite programming problem, where the $\log \det$ is difficult to work with.

\subsubsection*{The Initialisation}

In our analysis, we assume that the instance is feasible.
This is difficult to circumvent, considering Lehmann et al. \cite{Lehmann2015} have shown it is NP-Hard to test whether an instance of ACOPF is feasible.
In our numerical experiments, however, we choose $R$ such that each element is independently identically distributed, uniformly over [0, 1],
which need not be feasible for the instance of ACOPF. 
Subsequently, we compute $t,h,g,u,v,z$ to match the $R$, projecting the values onto the intervals given by the box-constraints. 
Although one may improve upon this simplistic choice by a variety of heuristics, it still performs well in practice.

\subsubsection*{The Choice of $\mu_k, \nu_k$}

The choice of $\mu_k, \nu_k$ may affect the performance of the algorithm.
In order to prove convergence, one requires $\lim_{k \to \infty} \nu_k \to 0$, % Page 12 of Burer2003
 but computationally, we consider $\mathcal{R} = 0$, which obliterates the need for varying $\nu_k$
%\rr{For $\mathcal{R} = 0$, one performs projections only onto the box-constraints.
%For $\mathcal{R} = \det(R^T R)$, which we consider in our analysis, one would have to 
and computation of the gradient,
%\begin{align*}
$\nabla \det(R^T R)$, %= R \; \textrm{cofactor}(R^T R),
%\end{align*}
as suggested by \cite{BurerMonteiro2005}. 

In order to prove convergence, any choice of $\mu_k > 0$ is sufficient.
Computationally, we use $\mu_k = \mu = 0.0001$ throughout much of the reported experiments.
On certain well-known pathological instances, c.f., Section~\ref{sec:pathological}, 
other choices may be sensible. Considering that many of those instances are very small,
the use of large step-sizes, such as $\mu_k = \mu = 0.01$ may seem justified. 
We have also experimented with an adaptive strategy for changing $\mu_k$, as detailed in Section~\ref{sec:adaptivemu}.
There, we require the infeasibility to shrink by a fixed factor between consecutive iterations. 
If such an infeasibility shrinkage is not feasible, one ignores the proposed iteration update and picks $\mu_{k+1} < \mu_{k}$ such
that the infeasibility shrinkage is observed.
Although the performance does improve slightly, % considering that this introduces another parameter, 
we have limited the use of this strategy to Section~\ref{sec:adaptivemu}.
We have also experimented with decreasing $\mu_k$ geometrically, i.e. $\mu_k = c \cdot \mu_{k-1}$,
but we have observed no additional benefits.
The fact one does not have to painstakingly tune the parameters is certainly a relief.

\section{Implementation Details}
\label{sec:details}

In order to understand the details of the implementation, it is important to realise that
\begin{itemize} 
\item the quadratic forms such as $Y_k xx^T$ in the polynomial optimisation problem \eqref{PP4} 
and $Y_k RR^T$ in the augmented Lagrangian \eqref{AugLagrangian}
are used for the simplicity of presentation. 
Indeed, the evaluation of the quadratic forms can be simplified considerably,
when one does not strive for the brevity of expression
\item \EDIT{many well-known benchmarks, including the Polish network, consider an extension of
the polynomial optimisation problem \eqref{PP4}.} \\[3mm]
\end{itemize}

\EDIT{
We elaborate upon these points in turn.

\subsection{The Simplifications}
}

Let us consider the example of $\trace(Y_k xx^T)$ in more detail. The evaluation of the 
quadratic form can be simplified to %4 look-ups into the vector $x$ and 
$16 \; \|Y_k\|_0$ multiplications,
%\todo[inline]{we have there also trace, so we do not need to compute the whole matrix, just diagonal, right? hence we need for every non-zero entry only 2 multiplications}
i.e., performed in time linear in the number $\|Y_k\|_0$ of non-zero elements of the matrix $Y_k$.
Moreover, the evaluation of the 
trace of the quadratic form requires only $2 \; \|Y_k\|_0$ multiplications,
as one needs to consider only the diagonal elements.
The number of non-zero elements varies in each power system, and is quadratic in the
number of buses for hypothetical systems, where there would be a branch between each pair of buses,
but the number of non-zero elements is linear in the number of buses, in practice.
Consequently, one can simplify the first step of the inner loop (Lines 6--7), where one minimises 
the augmented Lagrangian \eqref{AugLagrangian}, i.e., $\mathcal{L}(R,
t,h,g,u,v,z,\lambda^t, \lambda^g, \lambda^h,
\lambda^{u},\lambda^{v},\lambda^z)$, 
with respect to $t$, as follows. 
At iteration $k$, for each coordinate $j$, in parallel, one computes the update:
%\todo[inline]{I think we should put some trace in the last term in numerator in next equations???}
\begin{align}
t_j^{(k)} := & - \frac{ \pi^t_j + c^1_j S_b + 2 c^2_j S_b^2 (P_j^g)^2  - \frac{\trace(Y_k \RK(\RK)^T)}{\mu}}{ \frac{1}{\mu} + 2 c^2_j S_b^2 },
\end{align}
where $S_b$ is the base power of the per-unit system, such as 100 MVA, and $\pi^t$ are the residuals:
%\todo[inline]{I think we should put some trace in the last term in numerator in next equations???}
\begin{align}
\pi_j^t := & - \frac{ \trace(Y_k R^{(k-1)}(R^{(k-1)})^T) - t_j^{(k-1)} }{\mu}. \label{residual-pit}
\end{align}
Notice that the terms $c^1_j S_b$ and $2 c^2_j S_b^2$ are constant throughout the run and can hence be pre-computed,
while the residuals \eqref{residual-pit} have to be precomputed only once per iteration, e.g., 
just before the termination criteria are evaluated (Line 10), and hence there are only 3 multiplications and 1 division involved,
in addition to the run-time of the evaluation of the quadratic form.
Subsequently, one projects onto the box-constraints, i.e., if $t_j$ is large than the upper bound, it is set to the upper bound.
If $t_j$ is smaller than the lower bound, it is set to the lower bound.
Similar simplifications can be made for minimisation of the augmented Lagrangian \eqref{AugLagrangian}
with respect to other variables.\\[3mm]

%Notice also that the measure of infeasibility \eqref{eq:T} can be 
%maintained using residuals such as $pi^t$ above \eqref{residual-pit}, 
%which are also useful in the updates.

\EDIT{
\subsection{The Extensions}
}

\EDIT{In an often considered, but rarely spelled out} \cite{molzahn2011} extension of the ACOPF problem \eqref{eqn:pp4}, 
\EDIT{one allows} for tap-changing and phase-shifting transformers, as well as parallel lines
and multiple generators connected to one bus.
\EDIT{Let us denote the 
 the total line charging susceptance (p.u.) by $b_{lm}$,
 the transformer off nominal turns ratio by $t_{lm}$,
and the transformer phase shift angle by $\phi_{lm}$.
Then, the thermal limits \eqref{eqn:pp4} become:}

\begin{align}
\left[ {\begin{array}{*{20}c}
   {(S_{lm}^{\max})^2 } & {-\trace(Z_{lm} xx^T)}  & {-\trace(\bar{Z}_{lm} xx^T)}  \\
   {-\trace( Z_{lm} xx^T)} & {1}  & {0}  \\
   {-\trace( \bar{Z}_{lm} xx^T)} & {0}  & {1}  \\
\end{array}} \right] & \succeq 0  \\
\left[ {\begin{array}{*{20}c}
   { (S_{lm}^{\max})^2 } & {-\trace(\Upsilon_{lm} xx^T)}  & {-\trace(\bar{\Upsilon}_{lm} xx^T)}  \\
   {-\trace( \Upsilon_{lm} xx^T)} & {1}  & {0}  \\
   {-\trace( \bar{\Upsilon}_{lm} xx^T)} & {0}  & {1}  \\
\end{array}} \right] & \succeq 0 
\end{align}

where:
%listed in Figure~\ref{definitions}.
% WHAT IS IN MATLAB _km is now Upsilon
%\begin{figure}[thb!]
\begin{align}
Z_{lm} &= \frac{g_{lm}}{t_{lm}^2} (e_l e_l^T+e_{l+|N|} e_{l+|N|}^T) \label{ZlmLong} \\
       & - \left(\frac{g_{lm} \cos(\phi_{lm})+b_{lm} \cos(\phi_{lm}+\frac{\pi}{2})}{2 t_{lm}}\right) (e_l e_{m}^T+e_{m} e_l^T+e_{l+|N|} e_{m+|N|}^T+e_{m+|N|} e_{l+|N|}^T) \notag \\
       & + \left(\frac{g_{lm} \sin(\phi_{lm})+b_{lm} \sin(\phi_{lm}+\frac{\pi}{2})}{2 t_{lm}}\right) (e_l e_{m+|N|}^T+e_{m+|N|} e_l^T-e_{l+|N|} e_{m}^T-e_{m} e_{l+|N|}^T) \notag \\
\Upsilon_{lm} &=  g_{lm} (e_{m} e_{m}^T+e_{m+|N|} e_{m+|N|}^T) \\
       & - \left(\frac{g_{lm} \cos(-\phi_{lm})+b_{lm} \cos(-\phi_{lm}+\frac{\pi}{2})}{2 t_{lm}}\right) (e_l e_{m}^T+e_{m} e_l^T+e_{l+|N|} e_{m+|N|}^T+e_{m+|N|} e_{l+|N|}^T) \notag \\
       & + \left(\frac{g_{lm} \sin(-\phi_{lm})+b_{lm} \sin(-\phi_{lm}+\frac{\pi}{2})}{2 t_{lm}}\right) (e_{l+|N|} e_{m}^T+e_{m} e_{l+|N|}^T-e_l e_{m+|N|}^T-e_{m+|N|} e_l^T) \notag \\
\bar{Z}_{lm} &= -\frac{2 b_{lm}+\bar{b}_{lm}}{2 t_{lm}^2} (e_l e_l^T+e_{l+|N|} e_{l+|N|}^T) \\
       & + \left(\frac{g_{lm} \cos(\phi_{lm})+b_{lm} \cos(\phi_{lm}+\frac{\pi}{2})}{2 t_{lm}}\right) (e_l e_{m+|N|}^T+e_{m+|N|} e_l^T-e_{l+|N|} e_{m}^T-e_{m} e_{l+|N|}^T) \notag \\
       & + \left(\frac{g_{lm} \sin(\phi_{lm})+b_{lm} \sin(\phi_{lm}+\frac{\pi}{2})}{2 t_{lm}}\right) (e_l e_{m}^T+e_{m} e_l^T+e_{l+|N|} e_{m+|N|}^T+e_{m+|N|} e_{l+|N|}^T) \notag \\
\bar{\Upsilon}_{lm} &= -\frac{2 b_{lm}+\bar{b}_{lm}}{2} (e_{m} e_{m}^T+e_{m+|N|} e_{m+|N|}^T) \\
      & + \left(\frac{g_{lm} \cos(-\phi_{lm})+b_{lm} \cos(-\phi_{lm}+\frac{\pi}{2})}{2 t_{lm}}\right) (e_{l+|N|} e_{m}^T+e_{m} e_{l+|N|}^T-e_l e_{m+|N|}^T-e_{m+|N|} e_l^T) \notag \\
      & + \left(\frac{g_{lm} \sin(-\phi_{lm})+b_{lm} \sin(-\phi_{lm}+\frac{\pi}{2})}{2 t_{lm}}\right) (e_l e_{m}^T+e_{m} e_l^T+e_{l+|N|} e_{m+|N|}^T+e_{m+|N|} e_{l+|N|}^T) \notag 
\end{align}
%\caption{Definitions of $Z_{lm}, \Upsilon_{lm}, \bar{Z}_{lm}$, and $\bar{\Upsilon}_{lm}$.}
%\label{definitions}
%\end{figure}

where $e_k$ be the $k^{th}$ standard basis vector in $\mathbb{R}^{n}$, 
one can perform similar simplifications.

For instance, following the pre-computation of vectors $c^A, c^C, c^D \in \R^{|M|}$ prior to the outer loop of the algorithm,
the evaluation of the trace of the quadratic form, $\trace(Z_{lm} xx^T)$,
%\todo[inline]{Should we have also trace there?}
 considering \eqref{ZlmLong} can be implemented using 
  4 look-ups into the vector $x$ and 13 float-float multiplications:
%  \todo[inline]{Should we have also trace there on LHS?}
\begin{align}
\trace(Z_{lm} xx^T) = & \; c^A_{lm} (x_l^2 + x_{l+|N|}^2) \\
                      & +  c^C_{lm} (2 x_l x_m + 2 x_{l+|N|} x_{m+|N|}) \notag \\
                      &  +  c^D_{lm} (2 x_l x_{m + |N|} - 2 x_{m} x_{l+|N|}). \notag 
\end{align}
One can simplify the multiplication in a similar fashion for the remaining expressions involving 
(traces of) quadratic forms of $\Upsilon_{lm}, \bar{Z}_{lm}$, and $\bar{\Upsilon}_{lm}$ as well.\\[3mm]

\section{An Analysis}
\label{sec:analysis}

The analysis needs to distinguish between optima of 
 the semidefinite programming problem \ref{LL}, which is convex, 
and stationary points, local optima, and global optima of \ref{RrBC}, which is the non-convex rank-constrained problem.
%Specifically, for \eqref{RrBC} we are not guaranteed 
% that local optima are global optima.
%Let us clarify the distinction between stationary points and local optima.
%Clearly, for the \emph{convex} semidefinite programming problems, 
%stationary points are local optima which are global optima.
%For the \emph{non-convex} problems such as [ACOPF], [PP4], \ref{RrBC},
%the three can be distinct. 
Let us illustrate this difference with an example:

\begin{example}%[Example of SDP where a stationary point is not a local optimum]
Consider the following simple rank-constrained problem:
\begin{align}
\label{exrank}
\max_{W \in \mathcal{S}_+^2, \rank(W)=1}
  \tr(\diag(3,1) W), 
\end{align}
subject to $\tr(I W ) =1$ and
\begin{align}
W^* & := \begin{pmatrix}
1&0\\0&0\end{pmatrix} = (1,0) (1,0)^T, \notag \\
\tilde W & := 
\begin{pmatrix}
0&0\\0&1\end{pmatrix} = (0,1) (0,1)^T =: \tilde R \tilde R^T. \notag 
\end{align}
$W^*$ is the unique optimal solution of \eqref{exrank}, as well as the optimal solution of the SDP relaxation, where the rank constraint is dropped.
There exists a Lagrange multiplier such that $\tilde W$ is a stationary
point, though. % $\lambda$ 
Let us define a Lagrange function $\mathcal{L}(R, \lambda)
 = 
\tr(\diag(3,1) RR^T)
 + \lambda (\tr(I RR^T ) - 1)$.
 Then 
 $\nabla_R \mathcal{L}( RR^T,\lambda)
  = 2\diag(3,1) R +2 \lambda R  $.
  If we plug in $\tilde R$ and $\tilde\lambda = -1$
  we obtain
 $\nabla_R \mathcal{L}( \tilde R\tilde R^T,\tilde \lambda)
  =
   2\diag(3,1) (0,1)^T +2 \tilde\lambda (0,1)^T
  = 
   (0,0)^T  
    $.  
  Hence $(\tilde R, \tilde \lambda)$ is a stationary point of a Lagrange function. However, one can follow the proof of Proposition \ref{prop:3}
  to show that $[\tilde R, 0]$ is not a local optimum solution of SDP relaxation.
  \qed
\end{example}

Indeed, in generic non-convex quadratic programming, 
the test whether a stationary point solution is a local optimum \cite{murty1987} % ,Pardalos1988
is NP-Hard. %as second-order necessary conditions for optimality need not be sufficient.
For ACOPF, there are a number of sufficient conditions known, e.g. 
%the primal-dual condition of Molzahn et al. 
\cite{MolzahnDeMarco2014}.
Under strong assumptions, motivated by Observation $O_3$, we provide 
necessary and sufficient conditions
based on those of Grippo et al.\ \cite{grippo2009necessary}.
 %the work of %Grippo et al.
 % grippo2011unconstrained
%In keeping with their notation, we summarise [LL] as a semidefinite programming 
%problem in the standard form
%Further, 
We use $\otimes$ for Kronecker product and 
$I_r$ for identity matrix in $\R^{r \times r}$.
%and our reformulation [R$k$BC].
%We use $\sigma(A) = \{ \lambda_1, \ldots, \lambda_n \}$ to denote the
%spectrum of a matrix $A \in \mathcal{S}^n$. 

\begin{proposition}
Consider the SDP relaxation obtained from \ref{RrBC} by dropping the rank constraint and writing it down as 
$\min \tr(QW)$ such that $\tr(A_i W) = b_i$ for constraints $i = 1, \ldots, m$
in variable $W \succeq 0, W \in \mathcal{S}^n$.
If there exists an optimum $W^*$ with rank $r$ for the SDP relaxation,
for any point $R \in \R^{n \times r}$, $RR^T$ is a global minimiser of \ref{RrBC} if and only if there exists a $\lambda^* \in \R^m$ such that:
\begin{align}
\left[ {\left( {Q + \sum\limits_{i = 1}^m {\lambda^*_i A_i} } \right) \otimes I_r} \right] R & = 0 & \\
Q + \sum\limits_{i=1}^{m} \lambda_{i}^{*} A_i & \succeq 0 & \notag \\
R^{T}(A_i \otimes I_r) R & = b_i & \forall i = 1, \ldots, m. \notag
\end{align}
\end{proposition}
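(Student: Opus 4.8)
The plan is to route the entire argument through the convex SDP relaxation and its Lagrangian duality, exploiting the hypothesis that the SDP admits a rank-$r$ optimum. First I would record the KKT system for the convex SDP $\min \tr(QW)$ subject to $\tr(A_i W)=b_i$, $W \succeq 0$. Writing $S := Q + \sum_i \lambda_i A_i$ for the dual slack matrix (the paper's sign convention, so that dual feasibility reads $S \succeq 0$), a feasible $W$ is optimal if and only if there is $\lambda^* \in \R^m$ with: (i) primal feasibility $\tr(A_i W)=b_i$ and $W \succeq 0$; (ii) dual feasibility $S \succeq 0$; and (iii) complementary slackness $S W = 0$. For a convex program these conditions are sufficient for optimality with no constraint qualification, and under Slater's condition (strict feasibility of the SDP, assumed as one of the ``strong assumptions'' invoked in this section) they are also necessary, with the dual optimum $\lambda^*$ attained.

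Next I would establish relaxation tightness. Since the hypothesized SDP optimum $W^*$ has rank $r$, it is feasible for the rank-constrained problem \ref{RrBC} (which permits $\rank \le r$); hence the optimal value of \ref{RrBC} is squeezed between the SDP value, a relaxation, and $\tr(Q W^*)$, forcing the two optimal values to coincide. Consequently a matrix $RR^T$, which automatically has rank $\le r$, is a global minimiser of \ref{RrBC} if and only if it is simultaneously feasible for \ref{RrBC} and optimal for the SDP. This reduces the proposition to rewriting the three SDP-KKT conditions (i)--(iii) at $W = RR^T$ as conditions on the factor $R$.

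The translation rests on two elementary identities, with $S = Q + \sum_i \lambda_i^* A_i$. Writing the columns of $R$ as $r_1, \ldots, r_r$, one has $\tr(A_i RR^T) = \sum_j r_j^T A_i r_j = R^T (A_i \otimes I_r) R$, so primal feasibility (i) is exactly the third displayed condition, and dual feasibility (ii) is the second verbatim. For complementary slackness, $S \succeq 0$ together with $S RR^T = 0$ gives $\tr(S RR^T) = \|S^{1/2} R\|_F^2 = 0$, whence $S^{1/2} R = 0$ and therefore $S R = 0$, i.e. $(S \otimes I_r) R = 0$; conversely $(S \otimes I_r) R = 0$ yields $SR = 0$ and hence $S RR^T = 0$. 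With these equivalences, the $(\Leftarrow)$ direction is immediate: the three conditions reconstruct (i)--(iii), so $RR^T$ is SDP-optimal and, being \ref{RrBC}-feasible with matching value, \ref{RrBC}-optimal. The $(\Rightarrow)$ direction follows by reading off the attained multiplier $\lambda^*$ from SDP optimality of $RR^T$ and translating back. Note that the PSD condition (ii) is precisely what upgrades a mere stationary point of the factored Lagrangian to a global optimum, and is what rules out the spurious point $\tilde R$ of the preceding example.

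I expect the main obstacle to be the necessity $(\Rightarrow)$ direction, specifically the existence of the multiplier $\lambda^*$: this is where strong duality with dual attainment for the SDP is required, and hence where a constraint qualification such as Slater's condition must be invoked. By contrast the $(\Leftarrow)$ direction needs no such hypothesis, since KKT is sufficient for convex optimality unconditionally. A secondary, purely notational point is the convention under which the expressions $(S \otimes I_r) R$ and $R^T (A_i \otimes I_r) R$ are read, namely treating $R$ as a vectorised object, and matching it consistently to the matrix identities $SR = 0$ and $\tr(A_i RR^T) = b_i$; I would fix this convention once at the outset so that the Kronecker factors align correctly.
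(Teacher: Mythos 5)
Your argument is correct and, in substance, it is the route the paper itself relies on: the paper's proof is a short deferral to Proposition~3 of Grippo et al.\ \cite{grippo2009necessary}, whose proof proceeds exactly as you do --- tightness of the relaxation because the rank-$r$ SDP optimum $W^*$ is feasible for the rank-constrained problem, the SDP KKT system with dual slack $S=Q+\sum_i\lambda_i^*A_i$, and the equivalence of $SRR^T=0$ with $SR=0$ when $S\succeq 0$ --- so what you have written is essentially a self-contained expansion of that citation. The one place you genuinely diverge is the justification of strong duality with dual attainment in the necessity direction. You invoke Slater's condition, which is not among the stated hypotheses and is not obviously satisfied by these relaxations (the equality constraints at non-generator buses make strict feasibility delicate); the paper instead argues via Theorem~1 of \cite{Josz2015} and Theorem~1 of \cite{Ghaddar2015} that a redundant ball constraint --- available because the voltage-magnitude bounds make the feasible set bounded --- suffices for strong duality, cf.\ Observation $O_4$. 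Either route closes the gap, but you should state explicitly which constraint qualification you are assuming, since the proposition as written silently requires one; your own text correctly identifies this as the main obstacle but then leaves it as an unverified hypothesis rather than deriving it from the structure of the problem as the paper does. A final minor point: you rightly flag that $R^T(A_i\otimes I_r)R$ and $\left[(S)\otimes I_r\right]R$ only typecheck under a vectorisation convention for $R$, and your identities are consistent with the convention of Grippo et al., but a complete write-up should fix that convention once rather than leave it as a remark.
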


\begin{proof}
The proof is based on Proposition 3 of Grippo et al.\ \cite{grippo2009necessary},
which requires the existence of rank $r$ optimum and strong duality of the SDP relaxation.
The existence of an optimum of \ref{LL} with rank $r$ is assumed.
One can use Theorem 1 \cite{Josz2015} and Theorem 1 of \cite{Ghaddar2015} to show 
that a ball constraint is sufficient for strong duality in the SDP relaxation, c.f. Observation $O_4$.
\end{proof}

Notice that the test for 
whether there exists an optimum $W^*$ with rank $r$ for the SDP relaxation
is suggested Proposition 5, i.e. by solving for rank $r+1$.

%notice that we focus on an idealised version of 
%Algorithm~\ref{alg:SCDM} with $\mathcal{R} = \det(R^T R)$.

%\begin{proposition}
%Let $\mu>0$
%and $W^{(k)}$ be the (random) matrices produced by Algorithm~\ref{alg:SCDM} after $k$ iterations.
%Then Algorithm~\ref{alg:SCDM} is monotonic, i.e., for all $k\geq 0$,
%\begin{equation}\label{eq:monotonicity}
%  0 \leq  f(W^{(k+1)}) \leq f(W^{(k)}).
%\end{equation}
%Moreover, almost surely,
%\begin{equation} 
%\nabla_X f(W^{(k)})\to 0. 
%\end{equation}
%\todo[inline]{This could we true if we have constrained optimization, I need to think about this twice:)}
%\end{proposition}
%
%\begin{proof}[Sketch of the proof]
%Monotonicity can be deduced from ... Then assumption that $\mu>0$ together with monotonicity \eqref{eq:monotonicity}
%implies that the levelset ... is bounded. Hence, the Lipschitz constants $W$ and $V$ are bounded above. The rest follows again from
%...
%\end{proof}
%
%This can be had for any particular choice of the rank $r$. Notice, however, 
%that for some choices of rank $r$, we can prove a much stronger statement:

Next, let us consider the convergence:

\begin{proposition}
\label{prop:2}
%For every instance, 
There exists an instance-specific constant $r'$, such that for every $r \ge r'$,
whenever 
Algorithm~\ref{alg:SCDM} with $\EDIT{\{ \RK \} \in \R^{n \times r}}, \mathcal{R} = \det(R^T R)$ and $\lim_{k \to \infty} \nu_k \to 0$ produces solution with $\lim_{k \to \infty} \EDIT{T_k(\RK)} \to 0$
and a local optimum % stationary point ?!!
$\RK$ is generated within the inner loop (5--11),
%i.e. $\nabla_X f(W^{(k)})\to 0$, we have
$\RK (\RK)^T$ is an optimal solution to \ref{LL}. %with cost $\min([LL])$.
% where 
%[LL] is the semidefinite programming relaxation obtained by replacing the rank-constraint in [R1] with a 
%constraint to the positive-semidefinite cone, and 
%[OP$_4$-H$_1$] is the semidefinite programming relaxation of Ghaddar et al.
% obtained by replacing the rank-constraint in [R1] with a 
%constraint to the positive-semidefinite cone.
Moreover, $r'$ depends on the number of constraints $m$ in the optimisation problem and is $\mathcal{O}(\sqrt m)$.
\end{proposition}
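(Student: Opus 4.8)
The plan is to take $r'$ to be the Barvinok--Pataki rank bound for the spectrahedron underlying \ref{LL}, and then, for any $r\ge r'$, to lift a feasible, rank-deficient local optimum of the factored augmented Lagrangian to a global optimum of \ref{LL} by combining the low-rank method of Burer and Monteiro \cite{BurerMonteiro2005} with Proposition 3 of Grippo et al.\ \cite{grippo2009necessary} and the optimality characterisation established in the preceding Proposition.

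First I would write \ref{LL} in the standard form $\min \tr(QW)$ subject to $\tr(A_iW)=b_i$, $i=1,\dots,m$, $W\succeq 0$, exactly as in the preceding Proposition. The feasible region is a spectrahedron cut out by $m$ affine constraints, so by the classical dimension-counting argument of Barvinok and Pataki, whenever \ref{LL} is feasible and attains its optimum it does so at a face all of whose matrices have rank $r^*$ with $\binom{r^*+1}{2}\le m$. Setting $r':=\lfloor(\sqrt{8m+1}-1)/2\rfloor$ then gives $r'=\mathcal{O}(\sqrt m)$ and guarantees an optimal $W^*$ of \ref{LL} with $\rank(W^*)\le r'$; by Observation $O_2$, for every $r\ge r'$ this $W^*$ equals $RR^T$ for some $R\in\R^{n\times r}$, which establishes the claimed bound on $r'$.

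Next I would fix $r\ge r'$ and extract feasibility of the limit. Since $T_k(\RK)$ is exactly the sum of squared residuals of all equality constraints of \ref{RrBC}, the hypothesis $T_k(\RK)\to 0$ forces every limit point of $\{\RK\}$ to satisfy those equalities, while the box constraints on $t,g,h,z$ hold throughout by the projection steps (Line 4 and the coordinate updates of Line 6); boundedness of the iterates follows from the voltage and apparent-power box constraints. Hence any limit point $\bar R$ yields $\bar W:=\bar R\bar R^T$ feasible for \ref{LL}. The assumed inner-loop local optimum, together with the augmented-Lagrangian multiplier updates, then produces in the limit a vector $\lambda^*\in\R^m$ for which the first and third conditions of the preceding Proposition, namely $[(Q+\sum_i\lambda_i^*A_i)\otimes I_r]\bar R=0$ and $\bar R^{T}(A_i\otimes I_r)\bar R=b_i$, hold.

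It remains to supply the dual feasibility $Q+\sum_i\lambda_i^*A_i\succeq 0$ and thereby invoke the preceding Proposition. Here the regulariser $\mathcal{R}=\det(R^TR)$ with $\nu_k\to 0$ is essential: it steers the iterates toward a rank-deficient factor without perturbing the limiting objective, since $\nu_k\det(R^TR)\to 0$ on the bounded feasible set, so that once $r\ge r'$ the limit satisfies $\rank(\bar R)<r$. A rank-deficient local minimum of the factored problem is precisely the case in which \cite{BurerMonteiro2005} and Proposition 3 of \cite{grippo2009necessary} guarantee that the associated $\lambda^*$ is dual feasible, closing the argument and showing $\bar W$ optimal for \ref{LL}. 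The main obstacle is exactly this rank-deficiency step: one must argue that the vanishing regulariser forces rank deficiency in the limit while not creating spurious second-order-critical points, since the Example above shows that a mere stationary point of the Lagrangian need not be a local optimum of the relaxation. A secondary technical point is obtaining convergent multipliers $\lambda^*$ from the outer loop, which requires a constraint qualification, i.e. strong duality of \ref{LL}; this is supplied, as in the preceding Proposition, by a ball constraint via \cite{Josz2015} and \cite{Ghaddar2015}.
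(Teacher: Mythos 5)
Your proposal is correct in substance and rests on the same three pillars as the paper's own proof: the Barvinok--Pataki bound $\binom{r'+1}{2}\le m$ giving $r'=\mathcal{O}(\sqrt m)$ \cite{barvinok2001,pataki1998}, feasibility of accumulation points extracted from $T_k(\RK)\to 0$, and the Burer--Monteiro principle that a rank-deficient second-order-critical point of the factored problem yields a global optimum of \ref{LL}. Where you differ is in how that principle is deployed. The paper invokes Theorem 3.3 of \cite{BurerMonteiro2005} as a black box, checking its four hypotheses ($T_k\to 0$, vanishing gradient, asymptotically nonnegative curvature, and $\rank(\RK)<r$ for all $k$) and then removing the boundedness assumption via their Theorem 5.4; you instead re-derive the conclusion by explicitly constructing multipliers $\lambda^*$ and verifying the three conditions of the preceding Proposition (Grippo et al.\ \cite{grippo2009necessary}), obtaining dual feasibility $Q+\sum_i\lambda_i^*A_i\succeq 0$ from rank deficiency. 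Your route makes the dual certificate explicit and connects the convergence statement to the optimality test already stated in the paper, at the cost of needing strong duality, which you correctly source from the ball-constraint argument; the paper's route is shorter but leaves the certificate implicit, and it handles boundedness by citation rather than by your (reasonable) appeal to the voltage and apparent-power boxes. Note one small mismatch: the paper's condition $C_4$ demands rank deficiency of every iterate, whereas you only argue it for the limit point, which is what the Grippo-style characterisation actually needs. Both arguments hinge on the same delicate step --- that the vanishing regulariser $\nu_k\det(R^TR)$ really does force the rank-deficient solution to be selected --- which the paper dispatches with ``this can be seen easily by contradiction'' and which you honestly flag as the main obstacle; neither treatment fully closes it, so you are not missing anything the paper supplies.
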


\begin{proof}
The proof follows from Theorem 3.3 of Burer and Monteiro \cite{BurerMonteiro2005}.
One can rephrase Theorem 3.3 to show that if $\{ \RK \} \in \R^{n \times r}$ is a bounded sequence 
such that: 
\begin{itemize}
\item[$C_1$:] $\lim_{k \to \infty} T_k = 0$
\item[$C_2$:]  $\lim_{k \to \infty} \nabla \mathcal{L}(\RK) = 0$ 
\item[$C_3$:]  $\lim \inf_{k \to \infty} \nabla^2 \mathcal{L} (\RK)(H^k, H^k) \ge 0$ for all bounded sequences $\{ H^k \}$, $H^k \in \R^{n \times r}$
% TODO: We never explain the (H^k, H^k) notation, but it is not the same as:
%\item[$C_3$:]  $\lim \inf_{k \to \infty} \nabla^2 \mathcal{L} (\RK) (H^k (H^k)^T) \ge 0$ for all bounded sequences $\{ H^k \}$, $H^k \in \R^{n \times r}$ ?!
\item[$C_4$:]  $\rank(\RK) < r$ for all $k$
\end{itemize}
every accumulation point of $\RK (\RK)^T$ is an optimal solution of \ref{LL}.
Let us show that these four conditions are satisfied, in turn.
Condition $C_1$, which effectively says that $W=\RK (\RK)^T$ should be feasible with respect to constraints (\ref{R1-Constraint1}--\ref{R1-Constraint4}), 
is affected by the termination criteria of the inner loop, albeit only approximately for a finite $k$ and finite machine precision.
Conditions $C_2$ and $C_3$ follow from our assumption that $\RK$ is a local optimum. %stationary point.
%\todo[inline]{Then we need to check: 
The satisfaction of Condition $C_4$ can be shown in two steps:
First, there exists an $r'$, such that for every feasible semidefinite programming problem with $m$ constraints, 
there exists an optimal solution with a rank bounded from above by $r'$.
This $r'$ is $\mathcal{O}(\sqrt m)$.
This follows from Theorem 1.2 of Barvinok \cite{barvinok2001},
as explained by Pataki \cite{pataki1998}.
Second, the $\mathcal{R} = \det(R^T R)$ regularisation 
forces the lower-rank optimum to be chosen, should 
there be multiple optima with different ranks.
This can be seen easily by contradiction.
%or its simplification in Lemma 3.1 of \cite{BurerMonteiro2005}.
%The $\mathcal{O}(\sqrt m)$ bound on $r'$ follows from Theorem 1.2 of Barvinok \cite{barvinok2001}
%Later, \cite{BurerMonteiro2005} gave an elementary proof for a related Lemma 3.1. 
Finally, one can remove the requirement on the sequence to be bounded by the arguments of Burer and Monteiro \cite{BurerMonteiro2005}, as per Theorem 5.4.
%One could also weaken the assumption we obtain a local optimum by using Theorem 5.3 of \cite{BurerMonteiro2005}.
\end{proof}

%Question: Do the box-constraints count into $m$? If it were possible to extend this such
%that they would not ... 
%\todo[inline]{maybe not, let me check that theorem in \cite{BurerMonteiro2005}}
%Consequently, we obtain the following iteration complexity to global optima:

%Next, we clearly need means of distinguishing a stationary point and a local optimum:

Further,

\begin{proposition}
\label{prop:3}
Consider an instance of ACOPF such that there exists an optimum solution $W^*$ of \ref{LL} with rank 1 and $\forall k: c^2_k\geq 0$.
Let $\RK$ 
be a iterate produced by the  Algorithm~\ref{alg:SCDM}
when run with $r=1$
and moreover it is such that 
$T_k = 0$.
Then if $R R^T$ (where $R = [ \RK, {\bf 0}]$) is a local optimum 
of [R2BC], then 
$\RK (\RK)^T$ is a global optimum solution of [R1BC] and \ref{PP4}.
\end{proposition}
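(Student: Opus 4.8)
The plan is to exploit the rank deficiency of the lifted point $R = [\RK, \mathbf{0}] \in \R^{n \times 2}$ together with its assumed local optimality for [R2BC], invoke the rank-deficiency principle of Burer--Monteiro to certify global optimality for the semidefinite relaxation \ref{LL}, and then transfer this downward to [R1BC] and \ref{PP4} by a value-matching argument using the existence of a rank-one optimum.

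First I would record the consequences of $T_k = 0$. Since the target infeasibility \eqref{eq:T} vanishes, the defining equalities $t_k = \tr(Y_k \RK(\RK)^T)$, $g_k = \tr(\bar Y_k \RK(\RK)^T)$, $h_k = \tr(M_k \RK(\RK)^T)$ and those for $u_{lm}, v_{lm}, z_{lm}$ all hold exactly, so the auxiliary variables are pinned to functions of $W := \RK(\RK)^T$ and the box constraints \eqref{eq:bc1}, \eqref{eqn:bc2}, \eqref{eqn:bc3}, \eqref{eqn:bc4} collapse precisely to \eqref{R1-Constraint1}--\eqref{R1-Constraint4}. Hence $W$ is feasible for \ref{LL} and for [R1BC]. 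The hypothesis $\forall k : c^2_k \ge 0$ makes each $f_k$ a convex quadratic in $W$, so the relaxation \ref{LL} is a convex SDP (the thermal limits becoming the LMIs of Section~\ref{sec:details}); the voltage upper bounds $h_k \le (V^{\max}_k)^2$ furnish the ball constraint that, via Theorem~1 of \cite{Josz2015} and Theorem~1 of \cite{Ghaddar2015} exactly as in the proof of the optimality characterisation at the start of this section, yields strong duality.

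Next I would run the rank-deficiency argument. The lift obeys $\rank(R) = 1 < 2 = r$, and by assumption $RR^T$ is a local minimiser of [R2BC]. After eliminating the pinned auxiliary variables through their defining equalities, this is a rank-two factorised local minimiser of the convex SDP \ref{LL}. By the rank-deficiency result of Burer and Monteiro \cite{BurerMonteiro2005} --- the same tool underlying Proposition~\ref{prop:2} --- equivalently Proposition~3 of Grippo et al.\ \cite{grippo2009necessary}, a rank-deficient local minimiser of the rank-$r$ factorisation of a convex SDP is a global minimiser of that SDP. Concretely, the free direction supplied by the zero column lets the second-order local-optimality conditions produce a multiplier $\lambda^* \in \R^m$ for which $\left[\left(Q + \sum_i \lambda^*_i A_i\right) \otimes I_r\right] R = 0$, the dual slack $Q + \sum_i \lambda^*_i A_i \succeq 0$, and primal feasibility all hold; these are exactly the certificate conditions stated at the beginning of this section, so $W = \RK(\RK)^T$ is a global optimum of \ref{LL}.

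Finally I would match values. Dropping the rank constraint can only lower the optimum, so $\mathrm{opt}(\ref{LL}) \le \mathrm{opt}(\text{[R1BC]})$; conversely, by hypothesis \ref{LL} admits a rank-one optimum $W^*$, which is feasible for [R1BC], whence $\mathrm{opt}(\text{[R1BC]}) \le \sum_k f_k(W^*) = \mathrm{opt}(\ref{LL})$, and the two coincide. Since $\RK(\RK)^T$ is rank one, feasible for [R1BC], and attains this common value, it is a global optimum of [R1BC], and by the equivalence of Proposition~\ref{prop:1} also of \ref{PP4}. The main obstacle I anticipate is the rank-deficiency step: one must justify that local optimality of [R2BC] in the full variable space --- carrying box constraints and the nonconvex couplings $z_{lm} = u_{lm}^2 + v_{lm}^2$, which are convex in $W$ only after the feasible-point elimination --- genuinely descends to second-order local optimality of the factorised convex SDP, so that the zero-column curvature argument legitimately delivers the positive-semidefinite dual slack.
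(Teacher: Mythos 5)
Your proof is correct in outline, but it takes a genuinely different route from the paper's. The paper argues by contradiction with an explicit feasible descent path: writing the assumed rank-one optimum of \ref{LL} as $W^* = ww^T$, it forms $\tilde R^\lambda = [\sqrt{1-\lambda}\,\RK,\ \sqrt{\lambda}\,w]$, so that $\tilde R^\lambda (\tilde R^\lambda)^T = (1-\lambda)\RK(\RK)^T + \lambda W^*$ is a convex combination of two feasible points of \ref{LL}, hence feasible for [R2BC]; convexity of the objective in $W$ (from $c^2_k \ge 0$) then gives a strictly decreasing objective along the path for all $\lambda \in (0,1]$, contradicting local optimality of $[\RK, \mathbf{0}]$. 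You instead invoke the Burer--Monteiro/Grippo rank-deficiency certificate (a rank-deficient local minimiser of the rank-$r$ factorisation of a convex SDP is globally optimal for the SDP) and then match values between \ref{LL} and [R1BC]. Your value-matching step is clean and your reduction of the lifted problem to the convex SDP at a $T_k=0$ point is sound, but your route pays two costs the paper avoids: it needs strong duality (hence the ball-constraint argument) to produce the multiplier $\lambda^*$, and it needs the delicate descent of local optimality through the box-constrained auxiliary variables and the couplings $z_{lm}=u_{lm}^2+v_{lm}^2$ to second-order stationarity of the factorised SDP --- the obstacle you honestly flag. The paper's path argument sidesteps both, using only convexity of the objective and of the feasible set of \ref{LL} together with the explicit rank-one factorisation $W^*=ww^T$; in exchange, your argument makes the link to the KKT-style optimality certificate of the earlier proposition explicit, which the paper's example in Section~\ref{sec:analysis} only gestures at.
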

\begin{proof}
We will prove this proposition by contradiction.
For the sake of contradiction, we assume that
$R$ is a local optimum and 
$\RK$ is not a global optimum.
Therefore, we know that objective function of 
[R1BC] for $W_1 = W^*$ is smaller than 
for $W_2 = \RK (\RK)^T$ and 
both $W_1$ and $W_2$ are feasible.
By the assumption on optimum solution $W^*$ of \ref{LL}, we know that
$W^*$ can be written as
$W^*= w w^T$.
Now, it is easy to observe
that 
if we define
$\tilde R^\lambda=[(1-\sqrt{\lambda}) \RK, \sqrt{\lambda} w^*]$
then 
for any $\lambda\in [0,1]$ this vector is feasible for 
[R2BC].
Moreover, for $\lambda = 0$ we have $\tilde R^0 = R$.
Because the objective function $F$ of \ref{RrBC} is convex in $W$ with $c_k^2 \geq 0$,
we have that
$$
F( \tilde R^\lambda (\tilde R^\lambda)^T )
\leq
(1-\lambda) F(W_2) + \lambda F(W_1)
<  F(W_2)
$$
and hence for all $\lambda \in (0,1]$ we have
that
$F( \tilde R^\lambda (\tilde R^\lambda)^T )
 < F(W_2)$,
which is a contradiction with the assumption that
$R$ is a local optimum.
%\todo[inline]{Note than Monteiro 2011 needs some kind of regularity assumption which is not needed here as we have a special structure}
\end{proof}

%\todo[inline]{try to prove the thing about stationary of the local solution}

Overall, 

\begin{remark}
The first iteration of the outer loop of Algorithm~\ref{alg:SCDM} 
%with $\mathcal{R} = \det(R^T R)$ 
may produce a global optimum to \ref{R1} and \ref{RrBC} and \ref{PP4}, as suggested in Propositions \ref{prop:1}--\ref{prop:3}. 
The second and subsequent iterations of the outer loop of Algorithm~\ref{alg:SCDM} 
may find the optimum of \ref{LL}, the semidefinite programming problem, as suggested in Proposition \ref{prop:2},
but for $\mathcal{R} = 0$, they are not guaranteed to find 
the global optimum of \ref{R1} nor \ref{RrBC} nor \ref{PP4}. 
\end{remark}

One should also like contrast the ability to extract low-rank solutions
with other methods:

\begin{remark}
Whenever there are two or more optima of \ref{LL} with two or more distinct ranks, the maximum rank solutions are in the relative interior \EDIT{of the optimum face} of the feasible set, as per Lemma 1.4 in \cite{laurent2009sums}.
Primal-dual interior-point algorithms for semidefinite programming,
such as SeDuMi \cite{sturm1999},
in such a case return a solution with maximum rank.
\end{remark}

Put bluntly, one may conclude that as long as one seeks the exact optimum of \ref{ACOPF}, 
it does not make sense to perform more than two iterations of the outer loop of Algorithm~\ref{alg:SCDM}.
When it becomes clear by the second iteration that the rank-one optimum of \ref{LL} has not been extracted,
one should like to consider stronger convexifications \cite{Ghaddar2015}.
Notice that this advice is independent of whether one uses $\mathcal{R} = \det(R^T R)$ or $0$.
Although $\mathcal{R} = 0$ or $\nu = 0$ does not guarantee the recovery of low-rank solutions of \ref{LL}, 
\EDIT{in some cases \cite{bandeira2016low,boumal2016non},}
$\mathcal{R} = \det(R^T R)$ and $\nu > 0$ does not guarantee that the low-rank solution of the augmented Lagrangian \eqref{AugLagrangian} 
coincides with the optimum of \ref{ACOPF}, in some other cases.
It may hence be preferable to use $\mathcal{R} = 0$, as it allows \EDIT{for more iterations per second} %of Algorithm~\ref{alg:SCDM}
and post hoc testing of global optimality.

%This is explained by Observation $O_3$.

\section{Numerical Experiments}
\label{sec:numerical}

We have implemented Algorithm~\ref{alg:SCDM} in C++ with GSL and OpenMP and tested it on a collection \cite{Matpower} of well-known instances. % two instances from Bukhsh et al. \cite{Bukhsh2013}
%and two from Ghaddar et al. \cite{Ghaddar2015}.
For comparison, we have used three solvers specialised to ACOPF:
\begin{itemize}
\item the MATLAB-based Matpower Interior Point Solver (MIPS) version 1.2 (dated March 20th, 2015), which has been developed by Zimmerman et al.\ \cite{Matpower}
\item the C-based Semidefinite Programming optimal Power Flow ({\tt sdp\_pf}) version 1.0 (dated January 17th, 2014), 
%which is the Lavaei-Low SDP 
which has been developed by Mohlzahn et al.\ \cite{molzahn2011} using SeDuMi of Sturm et al.\ \cite{sturm1999}
%and shipped with Matpower 5.1 using 
\item the MEX-based {\tt OPF\_Solver} beta version dated December 13th, 2014, listed as the most current as of June 1st, 2016\footnote{\url{http://ieor.berkeley.edu/~lavaei/Software.html}}, which has been developed by Lavaei et al.\ \cite{7065336} using and SDPT3 of T{\"u}t{\"u}nc{\"u} et al.\ \cite{toh1999sdpt3,tutuncu2003solving}.
Notice that {\tt OPF\_Solver} produces 
\EDIT{feasible points with objective values that are near the values of the global optima}  across all instances tested, for some, non-default settings;
we have used the per-instance settings of epB, epL, and line\_prob, as suggested by the authors.
\end{itemize}
\EDIT{
For the comparison presented in Table~\ref{tab1}, we have used a standard laptop with Intel i5-2520M processor and 4 GB of RAM. 
We believe this is fair, as the solvers we compare with cannot make a good use of a more powerful machine.
%For the comparison with general-purpose interior-point methods, we have used a Lehigh University cluster, ***.
For the presentation of scalability of our code, we have used a 
machine with 24 Intel E5-2620v3 clocked at 2.40GHz and 128GB of RAM, but used only the numbers of cores listed.

We have also tested six general-purpose semidefinite-programming (SDP) solvers:
\begin{itemize}
\item CSDP version 6.0.1, which has been developed by Borchers \cite{borchers1999csdp}
\item MOSEK version 7.0, which has been developed by MOSEK ApS \cite{Andersen2003,permenter2015solving}
\item SeDuMi version 1.32, which has been developed by Sturm et al.\ \cite{sturm1999}
\item SDPA version 7.0, which has been developed by the SDPA group \cite{yamashita2012latest}
\item SDPT3 version 4.0, which has been developed by T{\"u}t{\"u}nc{\"u} et al.\ \cite{toh1999sdpt3,tutuncu2003solving}
\item SDPLR version 1.03 (beta), which has been developed by Burer and Monteiro \cite{BurerMonteiro2003}
%\item PenSDP version 2.2, which has been developed by Ko{\v c}vara and Stingl \cite{kocvara2006pensdp}.
\end{itemize}
Five of the codes (CSDP, SeDuMi, SDPA, SDPLR, and SDPT3) have been tested at the NEOS 7 facility \cite{714603} at the University of Wisconsin in Madison,
 where there are two Intel Xeon E5-2698 processors clocked at 2.3GHz and 192 GB of RAM per node.  
MOSEK has been used at a cluster equipped with one AMD Opteron 6128 processor clocked with 4 cores at 2.0 GHz and 32 GB of RAM per node,
as per the license to Lehigh University.
}

%in line with the experiments  \cite{BurerMonteiro2005}, 
 
\subsection{IEEE Test Cases}

Our main focus has been on the IEEE test cases. 
In Table~\ref{tab1}, we compare the run-time 
of \EDIT{our implementation of Algorithm~\ref{alg:SCDM}} with the run-time of the three leading solvers for the ACOPF listed above, two of which ({\tt sdp\_pf}, {\tt OPF\_Solver}) use elaborate tree-width decompositions.
In order to obtain the numbers, we ran Matpower first using default settings,
record the accuracy with respect to squared error $T_k$ \eqref{eq:T}, ran our solver up to the same accuracy,
and record the time and the objective function.
%Specifically, we should like to point to the instance
%case9mod of \cite{Bukhsh2013}, where the objective function value is reduced by 
%38.19\%.
%and to the standard case30Q,
%where the objective function value is reduced by 7.97\%.

\EDIT{
In Table~\ref{tabComparisonWithGP}, 
we present the run-time of six popular general-purpose SDP solvers for comparison.
We note that we have used the default parameters and tolerances of each code, so the precision may no longer match Matpower.
We list the reported CPU time rounded to one decimal digit, if that yields a non-zero number, and to one significant digit, otherwise.  
When we display a dash, no feasible solution has been found; the severity of the constraint violation and abruptness of the termination of the solver vary widely. 
For example, MOSEK terminates very abruptly with fatal error stopenv on five of the instances, while SDPT3 often runs into numerical difficulties. %  while reporting MSK\_RES\_TRM\_STALL with moderate primal constraint violations in the remainder of cases
Outside of SDPLR on the largest instance, no solver ran out of memory, iteration limit, or time limit.
In this comparison, SeDuMi seems to be most robust solver. 
SDPLR is also rather robust, but three orders of magnitude slower. 
Throughout, all interior-point methods (CSDP, Mosek, SeDuMi, SDPA, and SDPT3) perform much better on the dual of the SDP, than on the primal.
Please note that direct comparison with the run-time of our implementation of Algorithm~\ref{alg:SCDM}
reported in Table~\ref{tab1} is no possible, due to the use of three different platforms:
Five of the codes (CSDP, SeDuMi, SDPA, SDPLR, and SDPT3) have been tested at the NEOS facility, which does not allow for our code to be run, 
while Mosek has been tested at a Lehigh University facility.
Still, either facility has machines considerably more powerful than the machine used in the tests above, and we report the CPU time reported
by the individual solvers, rather than the wall-clock time, so we believe it is fair to claim our specialised solver outperforms the general-purpose solvers.
}

% Overall, 
%Our code seems to fast, whenever all three solvers produce a solution. 
%Our code also produces solutions for many instances, where
%{\tt sdp\_pf} does not.

%In Table~\ref{tab1}, we provide a comparison of solution
%quality, where we compare the best solutions obtained using our approach
%(often past the accuracy-based stopping criterion of the previous paragraph was reached)
%against the global optima of Ghaddar et al. \cite{Ghaddar2015}.

\EDIT{
\subsection{NESTA Test Cases}

Next, we have tested our approach on the recently introduced test cases from the NICTA Energy System Test Case Archive (NESTA) \cite{NESTA}.
There, bounds on commonly used IEEE test cases have been carefully tightened to make even the search for a feasible solution difficult for  many solvers.
For example, in the so called Active Power Increase (API) test cases,
the active power demands have been increased proportionally
throughout  the  network  so as to make thermal  limits  active.
In Table~\ref{tab2}, we present the results.
Out of the 11 API instances tested, Matpower and {\tt sdp\_pf} fail to find feasible solutions for three instances each.
Our implementation of Algorithm~\ref{alg:SCDM},
using default settings, including $\mathcal{R} = 0$, 
obtains feasible solutions across all the 11 instances, while improving over the objective function values obtained by either Matpower or {\tt sdp\_pf}.
For example, on the instance nesta\_case30\_as\_\_api, we improve the objective function value by about 10\%,
while on the instance nesta\_case30\_fsr\_\_api, we improve the objective by considerably more than 10\%.
Although we have not tested all instances of the NESTA archive, we are very happy with these preliminary results.
}

\subsection{Pathological Instances}
\label{sec:pathological}

Additionally, we have tested our approach on a number 
of recently introduced pathological instances \cite{Lesieutre2011,molzahn2013application,Bukhsh2013,Ghaddar2015,kocuk2016inexactness}. 
Depending on the choices of $R, \nu,$ and $\mu$, Algorithm~\ref{alg:SCDM} may perform better or worse than the relaxation of Lavaei and Low \eqref{LL}.
Let us illustrate this on the suggested settings of 
$\mathcal{R} = 0$, $\nu$ arbitrary, and single-thread execution.
In the example of Bukhsh et al. \cite{Bukhsh2013}, known as case2w, there are two local optima. Whereas many heuristics may fail or find the local optimum with cost 905.73, we are able to find the exact optimum with cost 877.78
in 0.49 seconds up to the infeasibility of $9.38 \cdot 10^{-6}$
with $\mu = 0.01$
and %as 877.06 
up to infeasibility of $7.23 \cdot 10^{-6}$ in 0.94 seconds
with $\mu = 0.0001$.
When we replace $V_2^{\max} = 1.05$ with 1.022, as in \cite{Ghaddar2015},
the instance becomes harder still, and the optimum of the relaxation of Lavaei and Low \eqref{LL} \EDIT{
is not rank-1. (Although one could project onto the feasible set of [R1BC] and extract a feasible solution of \ref{PP4}, it would not be the optimum of \ref{PP4}.)
}
%In contrast,  who also found the exact optimum with cost 905.73. 
With $\mu=0.01$, we find only a local optimum with cost 888.05 up to infeasibility of $8.29 \cdot 10^{-6}$, but with 
$\mu = 0.0001$, we do find the local optimum,
which in our evaluation has cost 905.66
up to infeasibility of $1.46 \cdot 10^{-9}$.
We stress that this optimum is not the solution of plain \eqref{LL}, but at the same time that, we do not provide
any guarantees of improving upon 
the relaxation of Lavaei and Low \eqref{LL}. 
In the example case9mod of  Bukhsh et al. \cite{Bukhsh2013},
$\mu=0.01$ does not converge within 10000 iterations, but
using $\mu = 0.0001$, we are able to 
find the exact optimum of 3087.84
and infeasibility $9.43 \cdot 10^{-12}$ in 12.16 seconds.
In the example case39mod2 of  Bukhsh et al. \cite{Bukhsh2013},
we are able to find only a local optimum with cost 944.71 and infeasibility $5.63 \cdot 10^{-8}$ after 45.03 seconds,
whereas the present-best known solution has cost 941.74.
In the example of Molzahn et al. 
\cite{Lesieutre2011,molzahn2013application}, which is known as
LMBM3, we find a solution with cost 5688.10 
up to infeasibility of $9.98 \cdot 10^{-6}$ in 0.88 seconds 
using $\mu = 0.01$ 
and a solution with cost 5694.34 
up to infeasibility of $1.14 \cdot 10^{-6}$ in 0.56
using $\mu = 0.0001$.
This illustrates that the choice of $\mu$ is important and
the default value, albeit suitable for many instances,
is not the best one, universally.

%\TODO{TODO: Rerun the instances with -P }

%\todo[inline]{should we remove the MOD instances? I guess that  whose, 
%where we produce the same solution as matpower are ok?}

\begin{sidewaystable}
\caption{The results of our numerical experiments on IEEE test cases and certain well-known pathological instances. Dash (--) indicates no feasible solution has been provided, often due to numerical issues or the lack of convergence. }
\label{tab1}
\begin{center}

\begin{tabular}{l|r|r|r|r|r|r|r|r|r} 
   \multicolumn{2}{c|}{Instance}
 & \multicolumn{2}{c|}{MATPOWER}
 & \multicolumn{2}{c|}{{\tt sdp\_pf}}  
 & \multicolumn{2}{c|}{{\tt OPF\_Solver}}  
 & \multicolumn{2}{c}{Algorithm~\ref{alg:SCDM}}  
 \\ \hline 
Name & Ref. & Obj. & Time [s] &  
        Obj. &  Time [s]
  &
    Obj. & Time [s]
  &
    Obj. & Time [s]

\\ 
\hline

case2w
& \cite{Bukhsh2013}
& ---
& ---
& 877.78
& 17.08 
& 877.78% really 7778
& 2.52
&  877.78    & 0.077

\\
case3w
& \cite{Bukhsh2013}
& ---
& ---
& 560.53
& 0.56
& 560.53
& 2.70
& 560.53     & 0.166

\\
case5 % WB5, and case5w are the same
& \cite{li2010small}
& 1.755e+04
& 21.80
& 1.482e+03
& 120.73
& 1.482e+03
& 25.93
& 1.482e+03   & 0.263

\\

%\\ 
%WB5 & \cite{Bukhsh2013}
%&3.264314e+02
%&7.723910e-01
%&3.264307e+02
%&2.313500e+00
%& 3.264303e+02 & 9.358597e-02  

case6ww & \cite{wood1996power}
& 3.144+03
&  0.114
&3.144e+03
& 0.74
&3.144e+03
&2.939
 & 3.144e+03 & 0.260
%\\ 
%{\bf case9mod} & \cite{Bukhsh2013}
%& {\bf 4.267068e+03}
%&7.724350e-01
%& 0.0
%&2.805652e+00
%& {\bf 3.087842e+03} & 3.937201e-01

%\\
%{\bf case9Q} & \cite{Matpower}
%&{\bf 5.301105e+03}
%&7.712110e-01
%&0.0
%&2.944110e+00
%&  
%  {\bf 5.296686e+03}   & 2.329800e-01
\\  
case14 & \cite{Matpower}
&8.082e+03
&0.201
&8.082e+03
& 0.84
%**
& --
& --
& 8.082e+03 & 0.031
 
%\\
%case14\_mod\_090 & \cite{Ghaddar2015}
%&7.806117e+03 &
%{\bf 1.154629e+00} &
%0.0 &
%3.812091e+00
%&
%  7.806115e+03 &    {\bf 2.573469e-01}
 \\
case30 & \cite{li2010small}
& 5.769e+02
& 0.788
& 5.769e+02
& 2.70
& 5.765e+02
& 6.928
& 5.769e+02   & 0.074

%\\
%case30\_mod\_m003 & \cite{Ghaddar2015}
%&5.894772e+02
%&8.165120e-01
%&0.0
%&8.600797e+00
% & 5.894770e+02   & 1.341289e+00 
%\\ 
%{\bf case30Q} & \cite{Matpower}
%& {\bf 6.230062e+02}
%& {\bf 8.266580e-01}
%&0.0
%&8.448967e+00
% & {\bf 5.768924e+02} &    {\bf 7.092309e-02}
\\ 
case39 & \cite{Matpower}
& 4.189e+04
& 0.399
& 4.189e+04 % 41889.14
& 3.26
& 4.202e+04
& 7.004
& 4.186e+04 & 0.885
\\
case57 & \cite{Matpower}
&4.174e+04
&0.674
& 4.174e+04
& 2.69
& --
& -- 
& 4.174e+04 & 0.857

\\
case57Tree
& \cite{kocuk2015new}
& 12100.86
& 6.13
& * 1.045e+04
& 4.48
& * 1.046e+04
& 342.00
& * 1.046e+04
    & 3.924

\\ 
case118 & \cite{Matpower} 
& 1.297e+05
 &  1.665
  &  1.297e+05
  &    6.57
  & --
   & --
   & 1.297e+05 & 1.967

\\
case300 & \cite{Matpower}
& 7.197e+05
& 2.410 
& --
& 17.68
& --
& -- 
& 7.197e+05    & 90.103
\end{tabular}\\[6mm]
\end{center}
*: We note that the precision here is approximately $10^{-6}$, which is the case for all three SDP-based solvers, 
Algorithm~\ref{alg:SCDM}, as well as {\tt sdp\_pf} and {\tt OPF\_Solver}.

\end{sidewaystable}

 %\item CSDP version 6.0.1, which has been developed by Borchers \cite{borchers1999csdp}
 %\item SeDuMi version 1.32, which has been developed by Sturm et al.\ \cite{sturm1999}
 %\item SDPA version 7.0, which has been developed by the SDPA group \cite{yamashita2012latest}
 %\item SDPT3 version 4.0, which has been developed by T{\"u}t{\"u}nc{\"u} et al.\ \cite{toh1999sdpt3,tutuncu2003solving}
 %\item SDPLR version 1.03 (beta), which has been developed by Burer and Monteiro \cite{BurerMonteiro2003}

%\begin{table}
\begin{sidewaystable}
\caption{For comparison, the CPU time in seconds of certain well-known general-purpose SDP solvers on the Lavaei-Low primal or dual SDP obtained from IEEE test cases and certain well-known pathological instances. Dash (--) indicates no feasible solution has been provided, often due to numerical issues or the lack of convergence. }
\label{tabComparisonWithGP}
\begin{center}
\begin{tabular}{l|l|r|r|r|r|r||r} 
   \multicolumn{2}{c|}{Instance} & \multicolumn{6}{c}{Time [s]} \\
   \hline
      Name & SDP & SeDuMi 1.32 & SDPA 7.0 & SDPLR 1.03 & CSDP 6.0.1 & SDPT3 4.0 & Mosek 7.0 \\ 
   \hline
%LMBM3LL & primal&0.4&--&0& 0.04 & 0.5 & 0.1 \\ 
%LMBM3LL & dual&0.4& 0.01 &0& 0.03 & 0.5 & 0.2 \\ 
case2w & primal&0.3& 0.003 &0& 0.02 & 0.4 & 0.1 \\ 
case2w & dual&0.3& 0.004 &0& 0.02 & 0.4 & 0.1 \\ 
case5w & primal&0.4&--&0& 0.04  & 0.5 & 0.2 \\ 
case5w & dual&0.4&--&0& 0.03 & 0.5 & 0.2 \\ 
case9 & primal&1.0& 0.05 &25& 0.1 & 0.5 & 0.3 \\ 
case9 & dual&1.0& 0.07&25& 0.1 & 0.6 & 0.3 \\ 
case14 & primal&0.7& 0.05 &58& 0.1 &-- & 0.2\\ 
case14 & dual&0.7& 0.04 &17& 0.1 &-- & 0.3\\ 
case30 & primal&2.8&--&829& 0.8 &-- & -- \\ % MOSEK Numerical problems  
case30 & dual&6.1&--&282& 1.0 &-- & --\\ % MOSEK fatal error stopenv. 
case30Q & primal&2.8&--&831& 0.8 &-- & -- \\ % MOSEK Numerical problems 
case30Q & dual&6.2&--&195& 1.0 &-- & -- \\ % MOSEK fatal error stopenv. 
case39 & primal&4.4&--&2769& 1.0 &-- & 0.7 \\ 
case39 & dual&7.7&--&723& 1.3 &-- & -- \\ % MOSEK fatal error stopenv. 
case57 & primal&3.2&--&1930& 0.5 &-- & 0.7\\ 
case57 & dual&4.0&--&1175& 1.0 &-- & 1.8\\ 
case118 & primal&10.3& 0.9 & 4400 & 3.4 &-- & 1.7 \\ 
case118 & dual&17.1&--&--& 13.0 &-- & -- \\ % MOSEK fatal error stopenv.
case300 & primal&27.5& 1.7 &--& 109.6 &-- & -- \\ % after 4.1885s, Mosek MSK_RES_TRM_STALL primal Viol.  con: 5e-03
case300 & dual&133.7&--&--& 66.7 &-- & -- \\ % MOSEK fatal error stopenv.
 \end{tabular}
 \end{center}
 %\end{table}
\end{sidewaystable}

% RIMAL_AND_DUAL_FEASIBLE  Solution status : NEAR_OPTIMAL

\clearpage

\clearpage

\begin{sidewaystable}

\caption{The results of our numerical experiments on NESTA test cases. When no feasible solution has been provided, we list the infeasibility of the least infeasible point provided in red parentheses. }
\label{tab2}
\begin{center}

% Ahoj Martin: ten OPF solver bych vynechal, pacto s normalnimi parametry nevyresi nic. 
% & \multicolumn{2}{c|}{{\tt OPF\_Solver}}  
 
\begin{tabular}{l|r|r|r|r|r|r} 
   Instance
 & \multicolumn{2}{c|}{MATPOWER}
 & \multicolumn{2}{c|}{{\tt sdp\_pf}}   
 & \multicolumn{2}{c}{Algorithm~\ref{alg:SCDM}}  
 \\ \hline 
Name  & Obj. & Time [s] &  
        Obj. &  Time [s]
  &     Obj. & Time [s]
%  &
%    Obj. & Time [s]
 \\
\hline

nesta\_case3\_lmbd
& 
5.812e+03 &
0.946  &
5.789e+03&
2.254&

 5.757e+03  &
 0.149 
\\
nesta\_case4\_gs
&
1.564e+02
&1.019
%2.139543e-23
& 
1.564e+02&
2.392&
1.564e+02
&
0.139 
\\
nesta\_case5\_pjm
& 
 {\color{red}(1.599e-01)} &
0.811
  &
 {\color{red}(1.599e-01)} &
 2.708&
2.008e+04  &
  0.216
\\
nesta\_case6\_c
& 
2.320e+01&
0.825 &
2.320e+01&
2.392&
2.320e+01  &
0.379  
\\
nesta\_case6\_ww
& 
3.143e+03&
0.884
 &
3.143e+03&
2.776
 &
  3.148e+03
  &  0.242
\\

nesta\_case9\_wscc
& 
5.296e+03 &
1.077  &
5.296e+03&
2.621&
5.296e+03  &
0.211  
\\
nesta\_case14\_ieee
& 
2.440e+02 &
0.762  &
2.440e+02&
3.164&
 2.440e+02 &
 0.267 
\\
nesta\_case30\_as        
& 
8.031e+02&
0.861&
8.031e+02&
3.916&
8.031e+02  &
 0.608 
\\
nesta\_case30\_fsr       
& 
5.757e+02&
0.898&
5.757e+02&
6.201&
 5.750e+02 &
 0.613 
\\
 
nesta\_case30\_ieee
& 
2.049e+02&
0.818&
2.049e+02&
4.335&
 2.049e+02 &
  0.788  
\\
nesta\_case39\_epri
& 
9.651e+04 &
0.863  &
9.649e+04&
5.676&
9.651e+04  &
 0.830 
\\

nesta\_case57\_ieee
& 
1.143e+03 &
1.119  &
1.143e+03&
8.053&

 1.143e+03 &
 0.957 
\\

nesta\_case118\_ieee  
& 
 {\color{red}(4.433e+02)}&
1.181 
&
 {\color{red}(4.433e+02)} &
 18.097 &
4.098e+03&
4.032
\\

nesta\_case162\_ieee\_dtc  
& 
 {\color{red}(3.123e+03)}&
0.975
 &
 {\color{red}(3.123e+03)} &
32.153
  &

4.215e+03  &
 8.328   
\\
 \hline

 nesta\_case3\_lmbd\_\_api
 & 
3.677e+02&
1.113 &

3.333e+02&
2.459 &

3.333e+02&
0.029
\\  nesta\_case5\_pjm\_\_api
 & 
 {\color{red}(5.953e+00)}
&1.002
&
 {\color{red}(5.953e+00)}
& 
2.426
&
3.343e+03  
&1.059
\\  nesta\_case6\_c\_\_api
 & 
8.144e+02&
1.034&
8.144e+02&
2.456&
8.139e+02&
0.722
\\  nesta\_case9\_wscc\_\_api
&
6.565e+02&
1.135&
 
6.565e+02&
2.869 &

 6.565e+02&
0.708 
\\

  nesta\_case14\_ieee\_\_api
 &
3.255e+02&
1.035&
3.255e+02&
3.569   &
3.245e+02 &
 0.214  
\\

  nesta\_case30\_as\_\_api
 & 
5.711e+02&
1.156&
5.711e+02&
5.433&
5.683e+02&
0.615
\\  nesta\_case30\_fsr\_\_api
 & 
3.721e+02&
0.862&
3.309e+02&
5.276&
2.194e+02&
0.604
\\  nesta\_case30\_ieee\_\_api
 & 
4.155e+02&
1.076&
4.155e+02&
5.293&
 4.155e+02&
0.618
\\  nesta\_case39\_epri\_\_api
 & 
 {\color{red}(1.540e+02)}&
0.965
 &
 {\color{red}(1.540e+02)}
&5.109
&
7.427e+03&
1.978
\\  nesta\_case57\_ieee\_\_api
 & 
  1.430e+03&
1.041
&1.429e+03&
7.625 
&
1.429e+03&
2.437

\\

 nesta\_case162\_ieee\_dtc\_\_api
 & 
 {\color{red}(1.502e+03)}&
1.215  &
 
 {\color{red}(1.502e+03)}&
43.339 & 
6.003e+03&
5.833
\\
 
 \end{tabular}
 \end{center} 
 
\end{sidewaystable}

\subsection{Adaptive Updates of $\mu$}
\label{sec:adaptivemu}

As it has been stressed above, it is important to pick an appropriate $\mu$.
Alternatively, one can adapt a strategy to change 
$\mu$ adaptively. 
This can be achieved by requiring the infeasibility to shrink by a fixed factor between consecutive iterations. 
If such an infeasibility shrinkage is not feasible, one ignores the proposed iteration update and decreases $\mu$.
Figures \ref{fig:adaptiveMu1} and \ref{fig:adaptiveMu2} show the evolution of 
objective function value and the infeasibility for 400 iterations with 
fixed values \EDIT{ $\mu\in \{10^{-1}, 10^{-2}, 10^{-3}, 10^{-4}, 10^{-5}, 10^{-6}\}$ } and for the adaptive strategy
on the instance case5.
Figure \ref{fig:adaptiveMu3} details the evolution of 
$\mu$ within the adaptive strategy \EDIT{on the same instance}.
One can see that choosing very small $\mu$ forces the algorithm to ``jump'' close to a feasible point, and get ``stuck''.
On the other hand, larger fixed values of $\mu$ lead to the same objective value.
One can also see that the adaptive strategy is rejecting the first 80 updates, until the value of $\mu$ is small enough; subsequently, it starts to make rapid progress.

\begin{figure}
\centering
\includegraphics[width=3in]{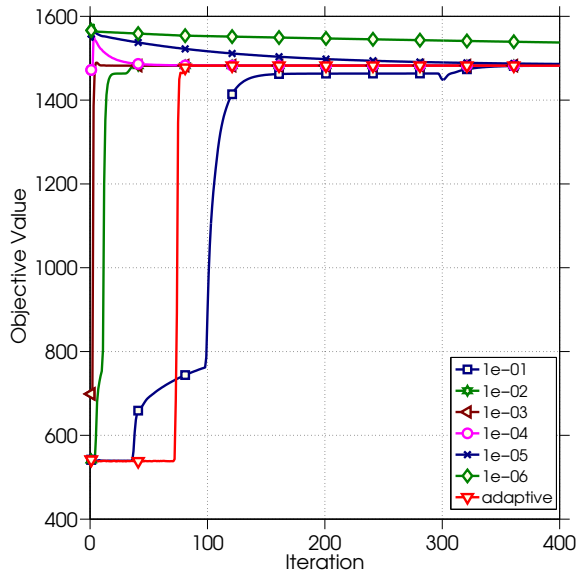}
\caption{The evolution of the objective function value \EDIT{over time on instance case5}, for a number of choices of $\mu$ and the adaptive updates of $\mu$.}
\label{fig:adaptiveMu1}
\end{figure}

\begin{figure}
\centering
\includegraphics[width=3in]{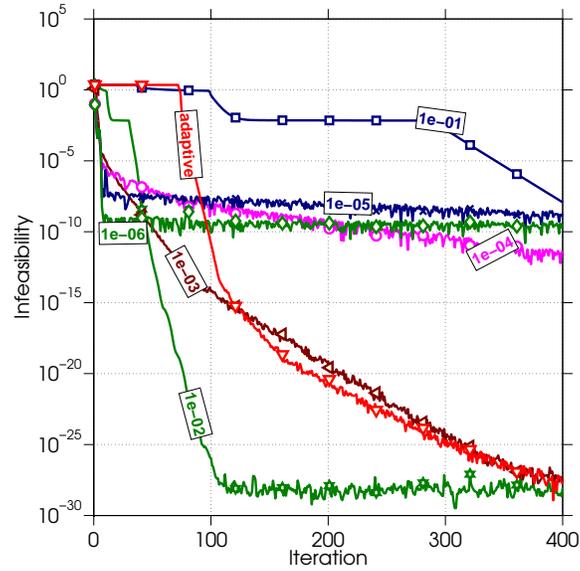}
\caption{The evolution of the infeasibility \EDIT{over time on instance case5}, for a number of choices of $\mu$ and the adaptive updates of $\mu$.}
\label{fig:adaptiveMu2}
\end{figure}

\begin{figure}
\centering
\includegraphics[width=3in]{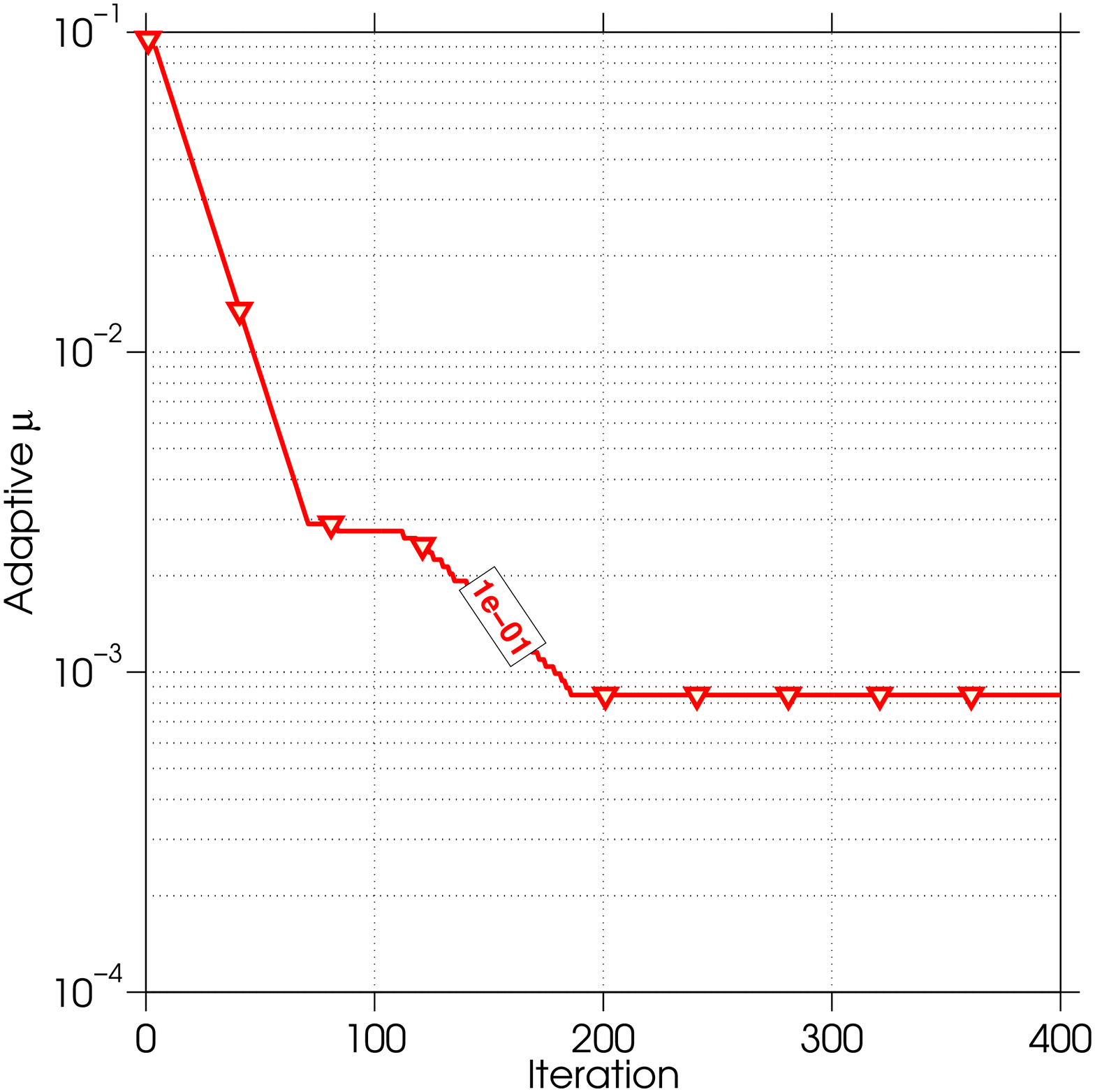}
\caption{The evolution of $\mu$ with adaptive updates \EDIT{over time on instance case5}.}
\label{fig:adaptiveMu3}
\end{figure}

\subsection{Large Instances}

We have also experimented with the so called Polish \EDIT{network} \cite{Matpower},
\EDIT{also known} as case2224 and case2736sp, 
as well as the instances collected by the Pegase project \cite{6465783}, 
such as case9241peg.
There, we cannot obtain solutions with the same precision as Matpower, 
within comparable run-times. 
However, we can decrease the initial infeasibility
by factor of $10^2$ for case2224 in 200 seconds,
by factor of $10^6$ for case2736sp in 200 seconds, and 
by factor of $10^4$ for case9241peg, again in 200 seconds.
Although these results are not satisfactory, yet, they may be useful, when a good initial solution is available.
As discussed in Section~\ref{conclusions}, we aim to improve upon these results by using 
a two-step method, where first-order methods on the convex problem are 
combined with a second-order method on the non-convex problem.
These results also motivate the need for parallel computing. %as listed in Section~\ref{sec:parallel},
%and the use of methods with better rates of convergence.

\subsection{Parallel Computing}
\label{sec:parallel}

As it has been discussed above, Algorithm~\ref{alg:SCDM} is easy to implement in a parallel fashion. 
We have implemented our multi-threading variant using OpenMP, in order to achieve portability. 
The significant overhead of using OpenMP makes it impossible to obtain a speed-up on very small instance.
For example, in case3w,
there is not enough work to be shared across (any number of) threads to offset the overhead.
In Figure~\ref{fig:scaling}, we present the scaling properties of Algorithm \ref{alg:SCDM}. 
In particular, we illustrate the speed-up (the inverse of the multiple of single-threaded run-time) as a function of the number of cores
used for a number of larger instances. 
We can also observe that even case118 is too small to benefit from a considerable speed-up.
On the other hand, for large datasets, such as the Polish case2224 or larger, 
we observe a significant speed-up. 

\begin{figure}
\centering
\includegraphics[width=3in]{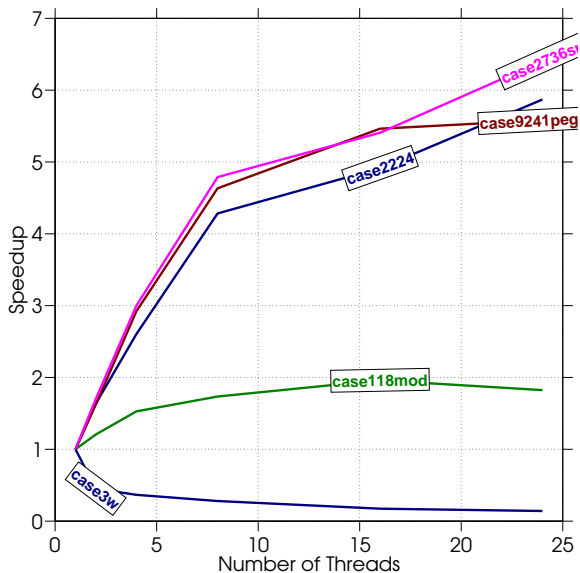}
\caption{The speed-up as a function of the number of cores employed.}
\label{fig:scaling}
\end{figure}
  
\section{Conclusions}
\label{conclusions}

Our approach seems to bring the use of SDP relaxations of ACOPF 
closer to the engineering practice.
A number of authors have recently explored
elaborately constructed linear programming 
\cite{Bienstock2015}
and second-order cone programming 
\cite{7285718,MolzahnHiskens2015a,MolzahnHiskens2015b,
kocuk2015new,kocuk2016strong} 
relaxations, many \cite{7285718,kocuk2016strong} of which are not comparable to the SDP relaxations in the sense that they may be stronger or weaker, depending on the instance,
but aiming to be solvable faster.
Algorithm~\ref{alg:SCDM}
suggests that there are simple first-order algorithms, which can solve SDP relaxations of ACOPF faster than previously,
at least on some instances.

A major advantage of first-order methods over second-order methods \cite{Matpower,sturm1999} is the ease of their parallelisability. 
This paper presents considers a symmetric multi-processing, where
memory is shared, but a distributed variant, 
where $\K$ agents perform the iterations, 
is clearly possible, c.f. \cite{marecek2015distributed}. 
The $\K$ agents may represent companies, each of whom owns some of the generators and does not want to expose the details, such as cost functions.
If $\K$ agents are $\K$ computers, a considerable speed-up can be obtained.
Either way, power systems analysis could benefit from parallel and distributed computing.

The question as to how far could the method scale, remains open.
In order to improve the scalability, one
could try to combine first- and second-order methods \cite{liu2015hybrid}.
We have experimented with an extension, which uses 
Smale's $\alpha$-$\beta$ theory \cite{Blum1997} to stop the computation
at the point $z_0$, where we know, based on the analysis of the
Lagrangian and its derivatives, 
that a Newton method or a similar algorithm with quadratic speed of convergence \cite{chen1994approximate}
will generate sequence $z_i$ to the correct optimum $z^*$, i.e.
\begin{align}
|z_i - z^* | \le (1/2)^{2^i - 1} |z_0 - z^*|.
\end{align}
This should be seen as convergence-preserving means of auto-tuning of the switch to a second-order method for convex functions. 
%which allows for tap-changing and phase-shifting transformers, paralllel lines, and multiple generators at one bus, among others, 
%which converges across all of Polish instances. 

One should also study infeasibility detection.
\EDIT{Considering the test of feasibility of an SDP is not known to be in NP \cite{ramana1997exact},}
we assumed the instance are feasible, throughout.
\EDIT{Nevertheless, one may need to test their feasibility first, in many practical applications.}
There are some very fast heuristics \cite{6629370}
available already, but one could also use Lagrangian methods in a two-phase
scheme, common in robust linear-programming solvers.
Both in methods based on simplex and feasible interior-point,
one first considers a variant of the problem, 
which has constraints relaxed by the addition of slack variables, 
with objective minimising a norm of the slack variables.
This makes it possible to find feasible solutions quickly,
and by duality, one can detect infeasibility.

One could also apply additional regularisations, following \cite{6736636,7402079}.
In \ref{R1}, one could drop the rank-one constraint and
modify the objective function to penalise the solutions
with large ranks, e.g., by adding the term $\lambda \|W\|^*$ to the objective function, where 
$\|\cdot\|_*$ is a nuclear norm \cite{fazel2002matrix}
and $\lambda>0$ is a parameter.
Alternatively, one can replace the rank constraint by the requirement that the nuclear norm of the matrix should be small, i.e. $\|W\|_* \leq \lambda$.
However, both approaches require a search for a suitable parameter $\lambda$ such that the optimal solution has indeed rank 1.
Moreover, the penalised alternative may not produce an optimal solution of \ref{PP4}, necessitating further algorithmic work.

Finally, one could extend the method to solve relaxations
a variety of related applications, such as 
security-constrained problems, 
stability-constrained problems, 
network expansion planning \cite{marecek2016minlp}, 
and unit commitment problems. 
The question as to whether the method could generalise to the higher-order relaxations, c.f., Ghaddar et al. \cite{Ghaddar2015}, also remains open.
%When the \ref{LL} relaxation does not have zero duality gap. 
First steps \cite{Ma2015PhD} have been taken, but much work remains to be done. % Ma2015 

\bibliographystyle{gDEA}
\bibliography{acopf,literature}

\end{document}